\newif\ifshowkeys
\newcommand{\CT}        {{\mathcal{T}}}
\newcommand{\fp}        {{\mathfrak{p}}}
\newcommand{\bc}[1]	{\langle #1\rangle}
\newcommand{\mbu}         {\mathbbm{1}}
\newcommand{\colim} {\operatornamewithlimits{\underset{\longrightarrow}{lim}}}
\newcommand{\hocolim}  {\operatornamewithlimits{\underset{\longrightarrow}{holim}}}
\newtheorem{theorem}{Theorem}[section]
\newtheorem{lemma}[theorem]{Lemma}
\newtheorem{proposition}[theorem]{Proposition}
\newtheorem{corollary}[theorem]{Corollary}
\theoremstyle{definition}
\newtheorem{remark}[theorem]{Remark}
\newtheorem{definition}[theorem]{Definition}
\newtheorem{example}[theorem]{Example}
\theoremstyle{theorem}
\newtheorem*{theorem*}{Theorem}
\newtheorem*{question}{Question}
\newcommand\blfootnote[1]{%
  \begin{NoHyper}%
  \renewcommand\thefootnote{}\footnote{#1}%
  \addtocounter{footnote}{-1}%
  \end{NoHyper}%
}
\title{The local-to-global principle via topological properties of the tensor triangular support}
\author{Nicola~Bellumat}
\begin{document}
\blfootnote{This research was partially supported by grant no.~DNRF156 from the Danish National Research Foundation.}
\maketitle

\begin{abstract}
Following the theory of tensor triangular support introduced by Sanders, which generalizes the Balmer-Favi support, we prove the local version of the result of Zou that the Balmer spectrum being Hochster weakly scattered implies the local-to-global principle.  

That is, given an object $t$ of a tensor triangulated category $\CT$  we show that if the tensor triangular support $\text{Supp}(t)$ is a weakly scattered subset with respect to the inverse topology of the Balmer spectrum $\text{Spc}(\CT^c)$, then the local-to-global principle holds for $t$. 

As immediate consequences, we have the analogue adaptations of the well-known statements that the Balmer spectrum being noetherian or Hausdorff scattered implies the local-to-global principle.

We conclude with an application of the last result to the examination of the support of injective superdecomposable modules in the derived category of an absolutely flat ring which is not semi-artinian.
\end{abstract}

\section{Introduction}
Our starting point is the following question, raised in \cite{BCHS-costratification}:
\begin{question}[{\cite[Question~21.8]{BCHS-costratification}}] \label{question-noethsupp}
Let $t$ be an object of a rigidly-compactly generated tensor triangulated category $\CT$. Suppose that the Balmer spectrum $\emph{Spc}(\CT^c)$ is weakly noetherian and the Balmer-Favi support $\emph{Supp}(t)$ is a noetherian subspace of $\emph{Spc}(\CT^c)$. Then, does the local-to-global principle hold for $t$?
\end{question}
This problem was motivated by \cite[Thm.~3.21]{BHS-stratification}, which states that in the case the whole Balmer spectrum is noetherian then the local-to-global principle holds. This result is nothing else that the adaptation to the setting of stratification via Balmer-Favi support presented in \cite{BHS-stratification} of the analogous finding by Benson, Iyengar and Krause demonstrated in \cite[Thm.~3.6]{bik}.

However, there is a statement even stronger than \cite[Thm.~3.21]{BHS-stratification}: in \cite[Thm.~7.6]{zou} Zou showed that if the Balmer spectrum $\text{Spc}(\CT^c)$ considered with the inverse topology is weakly scattered then the local-to-global principle holds. This work uses the tensor triangular support: a generalization of the Balmer-Favi support, developed by Sanders in \cite{supp-sanders}, which does not require the Balmer spectrum to be weakly noetherian.

Inspired by \cite[Thm.~7.6]{zou} and by the spirit of the Question, i.e.\ to weaken the assumption guaranteeing the local-to-global principle
to just the support of the examined object, we prove the following:
\begin{theorem*}
Let $t $ be an object of a rigidly-compactly generated tensor triangulated category $\CT$ such that its tensor triangular support $\emph{Supp}(t)$ is weakly scattered in $\emph{Spc}(\CT^c)$ endowed with the inverse topology. Then the local-to-global principle holds for $t$.
\end{theorem*}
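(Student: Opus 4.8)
The plan is to establish the substantive inclusion, that $t$ belongs to the localizing $\ot$-ideal $\CL := \langle \Gamma_\fp t \mid \fp \in \operatorname{Spc}(\CT^c) \rangle$ generated by the local pieces of $t$; the reverse inclusion $\CL \subseteq \langle t \rangle$ is automatic since $\Gamma_\fp t = g_\fp \ot t \in \langle t \rangle$, and together the two inclusions are the local-to-global principle for $t$. As preparation I would collect the features of the support calculus to be used, all valid for Sanders' tensor triangular support in a rigidly-compactly generated $\CT$ and requiring no weak noetherianity: for a Thomason subset $Y$ --- that is, an open of the inverse topology --- the finite acyclization $\Gamma_Y$ and localization $L_Y$, with their triangle $\Gamma_Y t \to t \to L_Y t$ and the support formulas $\operatorname{Supp}(\Gamma_Y t) = \operatorname{Supp}(t) \cap Y$ and $\operatorname{Supp}(L_Y t) = \operatorname{Supp}(t) \setminus Y$; the identities $\Gamma_{\operatorname{Spc}(\CT^c)} = \operatorname{id}$ and $L_{\operatorname{Spc}(\CT^c)} = 0$; the idempotent calculus relating $e_Y = \Gamma_Y \mbu$, $f_Y = L_Y \mbu$ and $g_\fp = \Gamma_\fp \mbu$; the stability of $\CL$ under all of $\Gamma_Y$, $L_Y$, $\Gamma_\fp$ (being a localizing $\ot$-ideal); and the continuity property $\Gamma_{\bigcup_i Y_i} \simeq \hocolim_i \Gamma_{Y_i}$ along an ascending chain of Thomason subsets.

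The argument then runs by transfinite induction, generalizing the Noetherian induction of \cite[Thm.~3.6]{bik} and \cite[Thm.~3.21]{BHS-stratification}, carried out along a Cantor--Bendixson filtration of $\operatorname{Supp}(t)$ inside the inverse topology, in the spirit of Zou's proof of \cite[Thm.~7.6]{zou}, but relativized from the whole spectrum to $\operatorname{Supp}(t)$. As $\operatorname{Supp}(t)$ is weakly scattered, repeatedly deleting from a subset closed in $\operatorname{Supp}(t)$ its relatively weakly visible points terminates, producing a decreasing chain $\operatorname{Supp}(t) = S_0 \supseteq S_1 \supseteq \cdots$, continuous at limits, with $S_\alpha = \varnothing$ eventually. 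Each $S_\alpha$ is closed in $\operatorname{Supp}(t)$, hence of the form $\operatorname{Supp}(t) \setminus Y_\alpha$ for a Thomason subset $Y_\alpha$, and these may be taken ascending and continuous at limits, with $Y_0 = \varnothing$ and $Y_\kappa = \operatorname{Spc}(\CT^c)$ for a suitable ordinal $\kappa$ (once $\operatorname{Supp}(t)$ is exhausted one merely enlarges the $Y_\alpha$ up to the full spectrum). Set $G_\alpha := \operatorname{cofib}(\Gamma_{Y_\alpha} t \to \Gamma_{Y_{\alpha+1}} t) \simeq L_{Y_\alpha} \Gamma_{Y_{\alpha+1}} t$; then $\operatorname{Supp}(G_\alpha) = S_\alpha \setminus S_{\alpha+1}$, the (relatively closed, discrete) set of weakly visible points deleted at stage $\alpha$, which is empty once $S_\alpha = \varnothing$. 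The $\Gamma_{Y_\alpha} t$ then form a transfinite filtered system, continuous at limit ordinals by the continuity property, with $\Gamma_{Y_0} t = 0$, successive cofibers $G_\alpha$, and colimit $\Gamma_{Y_\kappa} t = t$; the usual dévissage for cellular filtrations therefore places $t$ in the localizing subcategory generated by the $G_\alpha$, and it suffices to prove $G_\alpha \in \CL$ for every $\alpha < \kappa$.

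Proving $G_\alpha \in \CL$ is the crux, and where the weakly scattered hypothesis together with the weak visibility of the deleted points enters. Using $G_\alpha \simeq L_{Y_\alpha}\Gamma_{Y_{\alpha+1}} t$ and choosing the step $Y_\alpha \subseteq Y_{\alpha+1}$ so that the weak-visibility presentation of each point $\fp \in S_\alpha \setminus S_{\alpha+1}$ is compatible with $Y_\alpha$, one wants to exhibit $G_\alpha$ --- via the idempotent calculus of the $e_Y$, $f_Y$, $g_\fp$ --- as built out of the objects $g_\fp \ot (f_{Y_\alpha} \ot t) = L_{Y_\alpha}(\Gamma_\fp t)$, each of which lies in $\CL$ because $\Gamma_\fp t \in \CL$ and $\CL$ is an ideal. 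The three points I expect to carry the real work are: first, that ``relatively weakly visible in $\operatorname{Supp}(t)$'' suffices here --- in \cite[Thm.~7.6]{zou} the deleted points are weakly visible in the entire spectrum, so one must check that the weaker, relative notion still lets the idempotent calculus go through, perhaps at the cost of passing to a retract of $\fp$'s contribution; second, that the strata $G_\alpha$ of empty support (those with $S_\alpha = \varnothing$, which appear only in this relativized set-up) also lie in $\CL$ --- a detection-type input for localizations of $t$ with no counterpart in Zou's argument, where the filtration ends at $Y_\kappa = \operatorname{Spc}(\CT^c)$ with $L_{Y_\kappa} = 0$ for free, and which here must be extracted from the weakly scattered hypothesis; and third, the simultaneous bookkeeping of the idempotents attached to the (possibly infinitely many) points of a single stratum $S_\alpha \setminus S_{\alpha+1}$, which relies on its discreteness and on the continuity of the finite localizations.
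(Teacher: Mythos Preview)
Your overall strategy --- transfinite induction along a filtration of $\operatorname{Supp}(t)$ by Thomason subsets growing through the weakly isolated points, with continuity $\Gamma_{\bigcup Y_\alpha}\simeq\hocolim_\alpha\Gamma_{Y_\alpha}$ at limits and a coproduct decomposition at successors --- is essentially the paper's (Theorem~\ref{thm-wscsuppimpliesltg}); the paper in fact remarks that this constructive approach is \emph{different} from Zou's non-constructive one, so ``in the spirit of Zou'' is slightly off. There are, however, two genuine gaps.

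First, your target $\CL=\langle g_\fp\otimes t\rangle$ presupposes that $g_\fp$ exists for each prime, i.e.\ that the spectrum is weakly noetherian. Zou's local-to-global (\cite[Def.~7.1]{zou}) asks instead that $t\in\text{Locid}\langle t\otimes g(W_j)\rangle$ for every cover $\{W_j\}$ by weakly visible \emph{subsets}. A weakly isolated point $\fp$ of $\operatorname{Supp}(t)$ in the inverse topology need not have $\{\fp\}$ weakly visible, so your $g_\fp$ may simply not be defined; the ``retract'' fix you suggest does not produce one. The paper handles this by fixing an arbitrary cover $\{W_j\}$ and refining it (Proposition~\ref{prop-refinementcoverweaklyscat}): each weakly isolated point $w$ at stage $\alpha$ receives a weakly visible \emph{neighbourhood} $U_{\alpha,w}$ contained in some $W_j$, and the induction runs with the idempotents $g(U_{\alpha,w})$ rather than $g_\fp$.

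Second, the strata $S_\alpha\setminus S_{\alpha+1}$ are not discrete in general; your third concern is real, but discreteness is not its resolution. What makes the coproduct splitting work at successor stages is that, after localizing to $\overline{S}^\vee$ so that $S$ is dense, the weakly isolated points at each stage admit \emph{pairwise disjoint} Thomason neighbourhoods $U_{\alpha,w}$. Proving disjointness is the non-obvious step: it uses that weakly isolated points of a dense subset are weakly isolated in the ambient space, and then an essential-primes argument in the frame of opens (Proposition~\ref{prop-refinementcoverweaklyscat}, via \cite[Lemma~3.6]{spasublocales}) to rule out closure-inclusions among them. With disjointness in hand, $\Gamma$ of the union splits as a coproduct as in \cite[Lemma~3.13]{ste-dimfunctions}. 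Your second concern then dissolves: once the $U_{\leq\alpha}$ cover $S$, the standing detection hypothesis gives $t\cong\Gamma_{U_{\leq\delta}}t$ directly, so no enlargement of the $Y_\alpha$ to the full spectrum is needed and no empty-support strata arise.
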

This theorem gives a positive answer to \cite[Question~21.8]{BCHS-costratification}. Moreover, it also implies the relative statement of \cite[Thm.~5.6]{ste-dimfunctions}: if  $\text{Spc}(\CT^c)$ has the constructible topology and we have $t \in \CT$ with $\text{Supp}(t)$ scattered, then the local-to-global principle holds for $t$.

Finally, we use this last result to deduce that in $\text{D}(R)$, the derived category of a non semi-artinian absolutely flat ring, the support of an injective superdecomposable module must be contained in the maximal perfect subset of $\text{Spc}(\text{D}(R)^c)$, where this Balmer spectrum is homeomorphic to the Zariski spectrum $\text{Spec}(R)$.

We construct one of such modules so that its support coincides with the maximal perfect subset. The construction depends on the correspondence between the Loewy series of $R$ and the Cantor-Bendixson derivatives of the Zariski spectrum $\text{Spec}(R)$.\\

\textbf{Acknowledgements:} I thank Greg Stevenson for his interest and support to this work. I also thank Marcus Tressl for his advice on various matters regarding the topology of spectral spaces.

\section{Setup and notation}
We will mostly follow the notation and definitions of \cite{BCHS-costratification} and \cite{zou}. We will explicitly state when we introduce different notation.

From now on, $(\CT,\otimes, \mbu)$ denotes a rigid-compactly generated tensor triangulated category. Unless stated otherwise, we do not assume the Balmer spectrum $\text{Spc}(\CT^c)$ to be weakly noetherian (\cite[Def.~2.3]{BHS-stratification}). We will be considering on $\CT$ the tensor triangular support of \cite[Def.~5.7]{zou}, which was first introduced in \cite[Def.~4.1]{supp-sanders}. Since we will be using only this notion of support, from now on we will call it just support without further specification.

We make two assumptions on the tensor triangulated category $\CT$: first, we assume it satisfies the detection property (\cite[Def.~6.9]{zou}), i.e.\ if there is an object $t\in \CT$ such that $\text{Supp}(t)=\emptyset$ then $t=0$. 

Second, we assume $\CT$ admits a model, i.e.\ it is the homotopy category of a stable monoidal model category or of a stable monoidal infinity category, or it is the underlying category of a strong stable monoidal derivator. This allows us to have a well-behaved theory of homotopy colimits. See \cite[Rmk.~2.19]{ste-dimfunctions} for more details.

We introduce our own notation regarding the Rickard idempotents:
\begin{definition}
Let $Y\subseteq \text{Spc}(\CT^c)$ be a Thomason subset, so we have the corresponding thick tensor ideal of compact objects $\CT^c_{Y}$ by the classification theorem due to Balmer. Then we can invoke \cite[Thm.~3.3.5]{hopast:ash} to obtain from $\CT^c_{Y}$ an exact triangle
\[ \Gamma_{Y}\mathbbm{1}\rightarrow \mathbbm{1} \rightarrow L_{Y^c}\mathbbm{1}\rightarrow \Sigma \Gamma_Y\mathbbm{1} \]
where $\Gamma_{Y}\mathbbm{1} \in \text{Loc}\bc{\CT^c_Y}$ and $L_{Y^c}\mathbbm{1} \in \text{Loc}\bc{\CT^c_Y}^{\perp}$ are the idempotents associated respectively with the acyclicization and localization with respect to $\text{Loc}\bc{\CT^c_Y}$.

In the notation of \cite{BCHS-costratification}, we have the equalities
\[ e_{Y}=\Gamma_{Y}\mbu \qquad \qquad f_Y=L_{Y^c}\mbu. \]
We recall from \cite[Def.~2.2]{zou} that a subset $Z \subseteq \text{Spc}(\CT^c)$ is weakly visible if there exist two Thomason subsets $Y_1, Y_2 \subseteq \text{Spc}(\CT^c)$ such that $Z=Y_1\cap Y_2^c$. We denote by $g(Z)=\Gamma_{Y_1}\mbu \otimes L_{Y_2^c}\mbu$ the corresponding idempotent. If $\fp \in \text{Spc}(\CT^c)$ is a Balmer prime such that $\{\fp \}$ is weakly visible, then we will write simply $g(\fp)$ for the associated idempotent.
\end{definition}
We make this choice so that the subscript of an idempotent coincides with its support and to emphasize that left and right Rickard idempotents correspond to finite colocalizations and localizations respectively.

\begin{remark}\label{rmk-detectionidempotents}
It is proved in \cite[Thm.~4.2]{supp-sanders} that for any $Y\subseteq \text{Spc}(\CT^c)$ Thomason subset and $t\in \CT$ it holds
\[ \text{Supp}(\Gamma_Yt)=\text{Supp}(t)\cap Y \qquad \qquad \text{Supp}(L_{Y^c}t)=\text{Supp}(t)\cap Y^c. \]
Using these equalities and the detection property, we can see that if $\text{Supp}(t)\cap Y^c=\emptyset$ then $t\cong t\otimes \Gamma_Y \mbu$ and dually the equality $Y\cap \text{Supp}(t)=\emptyset$ implies $t\cong t\otimes L_{Y^c}\mbu$.
\end{remark}

\begin{definition}
Given a generic spectral space $X$ and a subset $V\subseteq X$ we denote by $\overline{V}^{\vee}$ the closure operator with respect to the inverse (i.e.\ Hochster dual) topology $X^{\vee}$. That is, $\overline{V}^{\vee}$ is the smallest complement of a Thomason subset of $X$ containing $V$. See \cite[Def.~1.4.1]{spespa} for a reference.

We denote by $X^{\flat}$ the set $X$ considered with the constructible (or patch) topology, see \cite[Def.~1.3.11]{spespa}. We denote by $\overline{V}^{\flat}$ the closure of $V$ in this topology.
\end{definition}

\begin{remark}
The condition of $\text{Spc}(\CT^c)$ being weakly noetherian means that its Hochster dual $\text{Spc}(\CT^c)^{\vee}$ is $T_D$, i.e.\ each of its points is locally closed (see \cite[\S4.5.10]{spespa}). If this property is satisfied, then for every prime $\fp \in \text{Spc}(\CT^c)$ we can define the idempotent $g(\fp)$ and the tensor triangular support coincides with the Balmer-Favi support examined in \cite{BHS-stratification} and \cite{BCHS-costratification}.
\end{remark}

\section{Support weakly scattered in the inverse topology}
We want to prove that an object $t \in \CT$ with the property that its support $\text{Supp}(t)$ is weakly scattered in $\text{Spc}(\CT^c)^{\vee}$ satisfies the local-to-global principle, in the sense of \cite[Def.~7.1]{zou}. The first step is showing that fixed a cover of the Balmer spectrum by weakly visible subsets there exists a refinement with the properties we need.
\begin{proposition}\label{prop-refinementcoverweaklyscat}
Let $C$ be a weakly scattered subset of a topological space $X$. Let $\{W_j\}_{j \in J}$ be a cover of $C$ by locally closed subsets, i.e.\ for any $j \in J$ there exist an open subset $V_j \subseteq X$ and a closed subset $Z_j \subseteq X$ such that $W_j=V_j \cap Z_j$.

Then, there exist an ordinal $\delta$ and an increasing sequence $\{U_{\leq \alpha}: \alpha \leq \delta \}$ of open subsets of $\overline{C}$ such that:
\begin{itemize}
\item[(i)] given $I_0$ the set of weakly isolated points of $C$, then $U_{\leq 0}$ consists in the disjoint union of subsets $U_{0, w}$ for $w \in I_0$ such that each $U_{0,w}$ is open in $\overline{C}$ and it is a neighbourhood of $w$;
\item[(ii)] given $I_{\alpha+1}$ the set of weakly isolated points of $C \setminus U_{\leq \alpha}$, then $U_{\leq \alpha+1}$ consists in the disjoint union of $U_{\leq \alpha}$ and subsets $U_{\alpha+1, w}$ for $w \in I_{\alpha+1}$ such that $U_{\alpha+1,w}$ is open in $\overline{C}\setminus U_{\leq \alpha}$ and it is a neighbourhood of $w$;
\item[(iii)] for a limit ordinal $\lambda$ we have $U_{\leq \lambda}=\bigcup_{\beta<\lambda}U_{\leq \beta}$;
\item[(iv)] $\{ U_{\alpha, w}: \alpha\leq \delta, w \in I_{\alpha}\}$ forms a cover of $C$ by locally closed subsets refining $\{W_j \}_{j \in J}$.
\end{itemize}
\end{proposition}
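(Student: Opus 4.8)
The plan is to build the filtration of $C$ by transfinite recursion on the Cantor--Bendixson-type process associated to weak isolation, and to refine the given cover at each stage. First I would recall that for a weakly scattered set $C$, every nonempty subset $D \subseteq C$ has a weakly isolated point, so the derived sets $C \supseteq C \setminus U_{\leq 0} \supseteq \cdots$ strictly decrease as long as they are nonempty; this guarantees the recursion terminates at some ordinal $\delta$. At stage $0$, for each weakly isolated point $w \in I_0$ of $C$, pick a locally closed $W_{j(w)}$ from the cover containing $w$; since $w$ is weakly isolated, there is a neighbourhood $N_w$ of $w$ in $\overline{C}$ such that $N_w \cap C$ is contained in the closure of $\{w\}$ (the precise formulation of ``weakly isolated'' I would take from \cite{zou} or \cite{ste-dimfunctions}). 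Intersecting, set $U_{0,w}$ to be an open-in-$\overline{C}$ neighbourhood of $w$ contained in $W_{j(w)}$ up to the closure relation; the key point is that $U_{0,w}$ can be taken to be locally closed in $\overline C$ and to refine the cover. To get the disjointness asserted in (i), I would shrink the $U_{0,w}$ using that distinct weakly isolated points can be separated by the closure structure, or simply observe that the natural candidates are already pairwise disjoint because each lies in the closure of a distinct singleton; this disjointness is the first delicate bookkeeping point.

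For the successor step (ii), I would apply exactly the same construction to the space $\overline{C} \setminus U_{\leq \alpha}$, which is open in $\overline C$ (hence itself a reasonable ambient space) and contains $C \setminus U_{\leq \alpha}$ as a weakly scattered subset with closure $\overline{C \setminus U_{\leq\alpha}} \subseteq \overline C \setminus U_{\leq \alpha}$ — here I need that removing an open set from $\overline C$ does not destroy the ``closure'' relation witnessing weak isolation, which should follow because weak isolation of $w$ in $C \setminus U_{\leq \alpha}$ only references points of $C \setminus U_{\leq\alpha}$, all of which survive in the smaller ambient space. The limit step (iii) is a definition and requires only checking that a union of an increasing chain of open subsets of $\overline C$ is open, which is immediate. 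Property (iv) is then assembled: the $U_{\alpha,w}$ cover $C$ because the recursion exhausts $C$ (every point of $C$ becomes weakly isolated in some derived set, by termination of the process), each $U_{\alpha,w}$ is locally closed in $\overline C$ hence in $X$, and each was chosen inside some $W_j$, giving the refinement.

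I expect the main obstacle to be the successor step, specifically verifying that the witnessing neighbourhoods $U_{\alpha+1,w}$ can be chosen simultaneously \emph{open in $\overline C \setminus U_{\leq\alpha}$}, \emph{pairwise disjoint}, \emph{neighbourhoods of the respective $w$}, and \emph{inside members of the cover} — all four at once. The tension is between disjointness (which wants the sets small) and being genuine neighbourhoods (a constraint that is automatic) together with local closedness in the \emph{changing} ambient space $\overline C \setminus U_{\leq\alpha}$, whose open sets are intersections of $\overline C$-opens with the complement of $U_{\leq\alpha}$. I would handle this by first producing, for each $w$, a locally closed set $\widetilde U_w$ in $\overline C$ with $w \in \widetilde U_w \subseteq W_{j(w)}$ and $\widetilde U_w \cap (C\setminus U_{\leq\alpha})$ inside the closure of $\{w\}$, then replacing it by $\widetilde U_w \setminus U_{\leq\alpha}$ and finally shrinking across the (automatically disjoint, by the closure-of-singleton containment) family; a secondary subtlety is that ``disjoint union'' in (i)--(ii) should be read as the $U_{\alpha,w}$ being pairwise disjoint \emph{and} their union being $U_{\leq\alpha}$ minus $U_{\leq\alpha-1}$, which I would make precise before starting the recursion. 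None of the individual verifications is deep, but the simultaneous satisfaction of all the listed properties is where the care lies.
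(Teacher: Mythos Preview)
Your overall transfinite-recursion scheme is the same as the paper's, and your identification of the disjointness in (i)--(ii) as the delicate point is correct. But your treatment of that point is a genuine gap, not just bookkeeping. You assert the natural candidates are ``already pairwise disjoint because each lies in the closure of a distinct singleton,'' and this needs two nontrivial facts you do not supply. First, from the definition of weak isolation you only get an open $U_w$ with $U_w \cap C \subseteq \overline{\{w\}}$; to conclude $U_w \subseteq \overline{\{w\}}$ the paper uses that $C$ is dense in $\overline{C}$: the set $U_w \cap \overline{\{w\}}^c$ is open and misses $C$, hence is empty. Without this step your $U_w$ may contain points of $\overline{C}\setminus C$ lying outside $\overline{\{w\}}$, and the closure containment you invoke fails. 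Second, even granting $U_w \subseteq \overline{\{w\}}$ and $U_{w'} \subseteq \overline{\{w'\}}$, closures of distinct points can be nested or overlap in a non-$T_1$ space, so disjointness is not automatic. The paper rules out inclusions $\overline{\{w\}} \subseteq \overline{\{w'\}}$ by a frame-theoretic argument: the weakly isolated points of $\overline{C}$ correspond (via \cite[Lemma~3.6]{spasublocales}) to essential primes of $\emptyset$ in the frame of opens of $\overline{C}$, and essential primes are minimal, hence incomparable. Only then does $w \notin U_{w'}$ follow, giving $U_w \subseteq \overline{\{w\}} \subseteq (U_{w'})^c$. Your alternative suggestion to ``shrink'' the $U_{0,w}$ does not help: there is no separation axiom available to guarantee disjoint open neighbourhoods exist at all.

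Two smaller corrections: $\overline{C}\setminus U_{\leq\alpha}$ is \emph{closed} in $\overline{C}$, not open (this is actually what you want, so that ``open in $\overline{C}\setminus U_{\leq\alpha}$'' yields locally closed in $\overline{C}$); and the containment $U_{0,w} \subseteq W_{j(w)}$ also relies on the density step above, since the paper takes $U_{0,w} = U_w \cap V_j$ and needs $U_w \subseteq \overline{\{w\}} \subseteq Z_j$ to land inside $V_j \cap Z_j = W_j$.
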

\begin{proof}
Without loss of generality, we can assume $X=\overline{C}$. We construct the invoked subsets $U_{\leq \alpha}$ and $U_{\alpha, w}$ using transfinite induction.

We first spell out the definition of $C$ being weakly scattered with respect to the topology induced by $\overline{C}$. For any closed subset $F\subseteq \overline{C}$ there exists a point $w \in F\cap C$ and an open subset $U \subseteq \overline{C}$ such that $w \in U\cap F\cap C \subseteq \overline{\{w \}}\cap C$.

For the starting step of the induction, we take $F=\overline{C}$ and set $I_0$ to be the collection of weakly isolated points in $C$. We saw that for a generic $w \in I_0$ we have an open neighbourhood $U_w$ such that $w \in U_w \cap C\subseteq \overline{\{ w\}}$. This means $U_w \cap \overline{\{ w\}}^c \cap C =\emptyset$, thus $U_w \cap \overline{\{ w\}}^c$ is an open subset of $\overline{C}\setminus C$. But $\overline{C}\setminus C$ has empty interior, due to $C$ being dense in $\overline{C}$, hence it follows $U_w \cap \overline{\{ w\}}^c=\emptyset$, i.e.\ $U_w \subseteq \overline{\{w \}}$. Now, since $\{ W_j\}$ is a cover of $C$ there must exist a $W_j$ such that $w \in W_j$. Suppose $W_j=V_j \cap Z_j$ with $V_j$ open and $Z_j$ closed respectively. We set $U_{0,w}=U_w \cap V_j$ and observe 
\[U_{0,w}=U_w \cap V_j\subseteq \overline{\{w\}}\cap V_j\subseteq Z_j \cap V_j=W_j. \]
We show that for two distinct $w, w' \in I_{0}$ the open neighbourhoods $U_{0,w}$ and $U_{0,w'}$ constructed with this method are disjoint. First, we prove that we cannot have inclusions $\overline{\{ w\}}\subseteq \overline{\{ w'\}}$ or $\overline{\{ w'\}}\subseteq \overline{\{ w\}}$. Indeed, we observe that $w$ and $w'$ must be weakly isolated points in $\overline{C}$, since we proved the open neighbourhoods $U_w$ and $U_{w'}$ are contained in the closures of the respective points. By \cite[Lemma~3.6]{spasublocales} it follows that $\overline{C}\setminus \overline{\{w \}}$ and $\overline{C}\setminus \overline{\{w' \}}$ are essential primes of $\emptyset$ in the frame of open subsets of $\overline{C}$. If it held  $\overline{\{ w\}}\subseteq \overline{\{ w'\}}$ we would have an inclusion $\overline{C}\setminus \overline{\{w' \}}\subseteq \overline{C}\setminus \overline{\{w\}}$, contradicting the fact that $\overline{C}\setminus \overline{\{w\}}$ is a minimal prime. The other inclusion $\overline{\{ w'\}}\subseteq \overline{\{ w\}}$ cannot hold by the same reasoning. Established this, we deduce that $w \not \in U_{w'}$, otherwise from $U_{w'}\subseteq \overline{\{w'\}}$ it would follow $\overline{\{ w\}}\subseteq \overline{\{ w'\}}$. Thus, $(U_{w'})^c$ is a closed subset containing $w$, hence we have the sequence of inclusions
\[ U_w\subseteq \overline{\{ w\}}\subseteq (U_{w'})^c. \]
Therefore, $U_w \cap U_{w'}=\emptyset$ and consequently $U_{0,w}\cap U_{0,w'}=\emptyset$. We conclude by setting $U_{\leq 0}=\bigcup_{w \in I_0}U_{0,w}$, which is clearly an open subset of $\overline{C}$.

Suppose the set $U_{\leq \alpha}$ has been defined, then we can construct $U_{\leq \alpha+1}$ with the same arguments we used for $U_{\leq 0}$. This time we take as ambient space the closed subset $\overline{C}\setminus U_{\leq \alpha}$ and $F=\overline{C}\setminus U_{\leq \alpha}$ when invoking $C$ being weakly scattered. Therefore, we set $I_{\alpha+1}$ to be the collection of weakly isolated points of $C \setminus U_{\leq \alpha}$ and conclude that for each $w \in I_{\alpha+1}$ there exists an open neighbourhood $U_{\alpha+1,w}$ contained in $W_j$ for some $j \in J$ such that for two different points $w$ and $w'$ the associated $U_{\alpha+1,w}$ and $U_{\alpha+1,w'}$ are disjoint. We set $U_{\leq \alpha+1}=U_{\leq\alpha}\cup \bigcup_{w \in I_{\alpha+1}}U_{\alpha+1, w}$ and observe that this is an open subset of $\overline{C}$ since each $U_{\alpha+1, w}$ is in the form $U_w \cap (U_{\leq \alpha})^c$ for some open subset $U_w \subseteq \overline{C}$.

If $\lambda$ is a limit ordinal we define $U_{\leq \lambda}=\bigcup_{\beta<\lambda}U_{\leq \beta}$ which is a union of open subsets.

We conclude by showing that this process must stop at some ordinal $\delta$. Suppose this is not the case, then for any ordinal $\alpha$ there exists a weakly isolated point in $C \cap U_{\leq \alpha+1}\setminus U_{\leq \alpha}$. By construction these points must be distinct. This would imply that the set $C$ contains a subset in correspondence with the collection of all ordinals, which is a proper class. Hence we get an absurd. 
\end{proof}

\begin{remark}\label{rmk-wscatpseudodim}
If $C$ is a scattered set, then we can take as neighbourhoods $U_{\alpha,w}=\{ w\}$ and $U_{\leq \alpha}$ coincides with $C_{\leq \alpha}$, the subset of $C$ given by the points with Cantor-Bendixson rank lesser or equal to $\alpha$.

The reverse is true: if $\{U_{\leq \alpha}\}$ induces a filtration of $C$ by a dimension function (in the sense of \cite[Def.~3.1]{ste-dimfunctions}) then this must coincide with the Cantor-Bendixson rank and $C$ must be scattered. Indeed, for any point $c \in U_{\alpha, w}$ we have $c \in \overline{\{w\}}$, hence by \cite[Def.~3.1 (ii)]{ste-dimfunctions} the dimension of $c$ is $\alpha$ and \cite[Def.~3.1 (iii)]{ste-dimfunctions} forces $c=w$. Consequently, $U_{\alpha,w}=\{w\}$ and an easy inductive argument shows $U_{\leq \alpha}=C_{\leq\alpha}$.
\end{remark}

\begin{remark}\label{rmk-liftloctoglobcovervialoc}
Let $Z\subseteq \text{Spc}(\CT^c)$ be the complement of a Thomason subset. We can form the localization adjunction $L_Z : \CT \rightleftarrows L_Z\CT : j$. We observe that $L_Z$ is a geometric morphism in the sense of \cite[Terminology~13.1]{BCHS-costratification}: in this case $f^*=L_Z$ and $f_*=j$.

In \cite[Rmk.~1.23]{BHS-stratification} it is explained that $\text{Spc}(L_Z)$ induces the identification of $\text{Spc}(L_Z\CT^c)$ with the subspace $Z\subseteq \text{Spc}(\CT^c)$.

Let $t \in \CT$ be such that $\text{Supp}(t)\subseteq Z$ and let $\{ W_i\}_{i \in I}$ be a cover of $\text{Supp}(t)$ by weakly visible subsets. Recall that by \cite[Rmk.~1.28]{BHS-stratification} we have $L_Zg(W_i)\cong g(W_i\cap Z)$.

Suppose we managed to prove that $L_Zt \in \text{Locid}\bc{t \otimes g(W_i \cap Z): i \in I}$ in the category $L_Z\CT$. Then we can use \cite[(13.4)]{BCHS-costratification} to deduce
\[ t\cong jL_Zt \in \text{Locid}\bc{t \otimes g(W_i\cap Z): i \in I}. \]
But since we have $g(W_i\cap Z)\cong L_Z\mbu \otimes g(W_i)$ we conclude
\[ t \in \text{Locid}\bc{t\otimes g(W_i): i \in I}. \]
Therefore, proving $t$ satisfies the local-to-global condition with respect to the cover $\{ W_i\}_{i \in I}$ is equivalent to proving it for $t$ considered as an object of the localized category $L_Z\CT$ taking as cover $\{W_i \cap Z \}_{i \in I}$.
\end{remark}

\begin{theorem}\label{thm-wscsuppimpliesltg}
Let $\emph{Supp}(t)\subseteq \emph{Spc}(\CT^c)$ be a weakly scattered subset with respect to the inverse topology. Then, the local-to-global principle holds for $t\in \CT$.
\end{theorem}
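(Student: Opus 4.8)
The goal is to show $t \in \text{Locid}\langle t \otimes g(W) : W \in \mathcal{W}\rangle$ for any cover $\mathcal{W}$ of $\text{Supp}(t)$ by weakly visible subsets. By Remark~\ref{rmk-liftloctoglobcovervialoc} we may replace $\CT$ by the localization $L_Z\CT$ where $Z$ is the complement of a Thomason subset containing $\text{Supp}(t)$; concretely I would take $Z = \overline{\text{Supp}(t)}^{\vee}$ (complement of a Thomason subset since $\text{Supp}(t)$ is a complement of a Thomason subset up to inverse-closure), so that without loss of generality $\text{Supp}(t)$ is dense in $\text{Spc}(\CT^c)$ for the inverse topology and $\overline{\text{Supp}(t)}^{\vee} = \text{Spc}(\CT^c)$. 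The point of this reduction is that Proposition~\ref{prop-refinementcoverweaklyscat} applies with $C = \text{Supp}(t)$, $X = \text{Spc}(\CT^c)^{\vee}$, and the cover given by the weakly visible $W_i$ (which are locally closed in the inverse topology: a weakly visible set $Y_1 \cap Y_2^c$ has $Y_2^c$ inverse-open and $Y_1$ inverse-closed).

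**Main construction.** Applying Proposition~\ref{prop-refinementcoverweaklyscat} gives an ordinal $\delta$, an increasing chain of inverse-open subsets $U_{\leq \alpha} \subseteq \text{Spc}(\CT^c)$, and a refinement $\{U_{\alpha,w}\}$ of $\{W_i\}$ by locally closed (hence weakly visible) subsets. Each $U_{\leq \alpha}$, being inverse-open, is the complement of a Thomason subset, so we have idempotents $L_{(U_{\leq\alpha})^c}\mbu$ — write $L_{\alpha}\mbu$ — fitting into localization triangles, and $\Gamma_{\alpha}\mbu \to \mbu \to L_{\alpha}\mbu$. I would then run a transfinite induction proving: $t \otimes L_{\alpha}\mbu \in \text{Locid}\langle t \otimes g(U_{\beta,w}) : \beta \leq \alpha, w \in I_\beta\rangle$ — actually it is cleaner to track the complementary statement that $t \otimes \Gamma_{\alpha}\mbu$, the "part of $t$ supported on $U_{\leq\alpha}$", lies in the localizing ideal generated by the relevant pieces. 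The successor step is the heart: $U_{\leq\alpha+1} \setminus U_{\leq\alpha}$ is, inside the closed subspace $(U_{\leq\alpha})^c$, the disjoint union of the $U_{\alpha+1,w}$ which are open neighborhoods of the weakly isolated points $w \in I_{\alpha+1}$; using Remark~\ref{rmk-detectionidempotents} and the fact that $\text{Supp}(t \otimes \Gamma_{\alpha+1}\mbu \otimes L_{\alpha}\mbu) = \text{Supp}(t) \cap (U_{\leq\alpha+1}\setminus U_{\leq\alpha}) = \bigsqcup_w (\text{Supp}(t) \cap U_{\alpha+1,w})$, one shows this "layer" of $t$ decomposes as a coproduct (or at least lands in the localizing ideal) indexed by the $U_{\alpha+1,w}$, and that $t \otimes g(U_{\alpha+1,w})$ is precisely the $w$-summand up to the generated localizing ideal. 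At limits $\lambda$, $\Gamma_{\lambda}\mbu = \text{colim}_{\beta<\lambda}\Gamma_{\beta}\mbu$ (or the analogous homotopy colimit statement, which is where the model assumption is used), so $t \otimes \Gamma_{\lambda}\mbu$ is a homotopy colimit of objects already in the localizing ideal. The induction terminates at $\delta$ with $U_{\leq\delta} \supseteq \text{Supp}(t)$, so $t \cong t \otimes \Gamma_{\delta}\mbu$ by the detection property (Remark~\ref{rmk-detectionidempotents}), giving the result.

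**Expected obstacle.** The delicate point is the successor step: showing that the layer $t \otimes \Gamma_{\alpha+1}\mbu \otimes L_{\alpha}\mbu$ genuinely splits along the disjoint open cover $\{U_{\alpha+1,w}\}$ of $\text{Supp}(t) \cap (U_{\leq\alpha+1}\setminus U_{\leq\alpha})$ and that each summand is captured by $t \otimes g(U_{\alpha+1,w})$. The weak isolation of $w$ only gives $U_{\alpha+1,w} \cap \text{Supp}(t) \subseteq \overline{\{w\}}^{\vee}$, so the summand is supported on an irreducible-in-the-inverse-topology piece with generic point $w$; one must argue that for such a "one-dimensional layer" the object is determined by its $g(U_{\alpha+1,w})$-localization, presumably by comparing the localization triangle for $U_{\leq\alpha} \cup U_{\alpha+1,w}$ inside $(U_{\leq\alpha})^c$ against the coproduct decomposition coming from disjointness of the $U_{\alpha+1,w}$ — this is exactly the mechanism by which the analogous weakly-noetherian arguments in \cite{BHS-stratification} and \cite{zou} proceed, and I expect the main work is adapting the disjoint-union/coproduct bookkeeping from the Balmer spectrum to the closed subspace $(U_{\leq\alpha})^c$ and keeping track of which idempotents commute with the localization $L_{\alpha}$. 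The rest — the reduction via Remark~\ref{rmk-liftloctoglobcovervialoc}, the limit step, and termination — is routine given the earlier results.
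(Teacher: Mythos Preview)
Your overall approach matches the paper's: reduce to $L_{\overline{S}^{\vee}}\CT$ via Remark~\ref{rmk-liftloctoglobcovervialoc}, apply Proposition~\ref{prop-refinementcoverweaklyscat}, then prove by transfinite induction that $\Gamma_{U_{\leq\alpha}}t$ lies in the localizing ideal generated by the $t\otimes g(U_{\beta,w})$, handling successors via a coproduct decomposition and limits via a homotopy colimit.

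Two points, however. First, you have the inverse topology backwards: the open sets of $\text{Spc}(\CT^c)^{\vee}$ are exactly the Thomason subsets of $\text{Spc}(\CT^c)$, not their complements. Thus $U_{\leq\alpha}$, being inverse-open in $\overline{S}^{\vee}$, is \emph{Thomason} there, and the relevant idempotent is the colocalization $\Gamma_{U_{\leq\alpha}}\mbu$ --- which is in fact what you end up tracking. Likewise, for a weakly visible $Y_1\cap Y_2^c$ with $Y_1,Y_2$ Thomason, it is $Y_1$ that is inverse-open and $Y_2^c$ that is inverse-closed, not the reverse. This slip does not break your argument but should be corrected.

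Second, your ``expected obstacle'' is not an obstacle. No analysis of $\overline{\{w\}}^{\vee}$ or ``one-dimensional layers'' is needed. After passing (again by Remark~\ref{rmk-liftloctoglobcovervialoc}) to $L_{\overline{S}^{\vee}\setminus U_{\leq\alpha}}\CT$, property~(ii) of Proposition~\ref{prop-refinementcoverweaklyscat} says the $U_{\alpha+1,w}$ are pairwise disjoint Thomason subsets of the new spectrum, and one has directly
\[
\Gamma_{U_{\leq\alpha+1}\setminus U_{\leq\alpha}}\,L_{\overline{S}^{\vee}\setminus U_{\leq\alpha}}t \;\cong\; \coprod_{w\in I_{\alpha+1}} t\otimes g(U_{\alpha+1,w})
\]
by the standard splitting over disjoint Thomason pieces (the paper invokes \cite[Lemma~3.13]{ste-dimfunctions}). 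Each $U_{\alpha+1,w}$ is already weakly visible, so $\Gamma_{U_{\alpha+1,w}}t = t\otimes g(U_{\alpha+1,w})$ on the nose. The successor step is therefore no harder than the base case.
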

\begin{proof}
For sake of notation, we set $S=\text{Supp}(t)$. We start by considering a generic cover $\{W_j \}_{j \in J}$ of $\text{Spc}(\CT^c)$ by weakly visible subsets. By the detection property, if $W_j \cap S=\emptyset$ we have $t\otimes g(W_j)=0$, thus we can consider only the induced cover of $S$.

Using Remark~\ref{rmk-liftloctoglobcovervialoc}, it is enough to show the local-to-global principle holds for $t$ considered as an object of the category $L_{\overline{S}^{\vee}}\CT$ with respect to the cover $\{ W_j \cap \overline{S}^{\vee}\}$. Thus, from now on the $W_j$'s are subsets of $\overline{S}^{\vee}$.

In this setting, we apply Proposition~\ref{prop-refinementcoverweaklyscat} to the space $X=\text{Spc}(\CT^c)^{\vee}$ and its subset $C=S$. This gives us the subsets $\{ U_{\leq \alpha} \}$ and $\{ U_{\alpha, w}: w \in I_{\alpha} \}$ satisfying the properties (i), (ii), (iiii) and (iv). We recall that, by very definition of inverse topology, the open subsets of $\text{Spc}(\CT^c)^{\vee}$ are the Thomason subsets of $\text{Spc}(\CT^c)$.

We prove by transfinite induction that for any ordinal $\alpha$ it holds
\[ \Gamma_{U_{\leq \alpha}}t \in \text{Locid}\bc{t\otimes g(U_{\beta, w}): \beta \leq \alpha, w \in I_{\beta}}, \]
where $\Gamma_{U_{\leq \alpha}}$ denotes the colocalization functor associated with $(L_{\overline{S}^{\vee}}\CT)_{U_{\leq \alpha}}^c$. This exists since by construction $U_{\leq \alpha}$ is a Thomason subset of the  Balmer spectrum $\overline{S}^{\vee}\cong \text{Spc}(L_{\overline{S}^{\vee}}\CT^c)$.

If $\alpha=0$, by construction $U_{\leq 0}$ is a disjoint union of Thomason subsets $\{U_{0,w}: w \in I_0 \}$  of $\overline{S}^{\vee}$. Hence, we can use the argument of \cite[Lemma~3.13]{ste-dimfunctions} to prove that we have a decomposition
\[ \Gamma_{U_{\leq 0}}t \cong \coprod_{w \in I_0}\Gamma_{U_{0,w}}t \cong  \coprod_{w \in I_0}t \otimes g(U_{0,w})    \]
thus the claim holds.

Now suppose the claim has been proved for an ordinal $\alpha$, we show it must follow for the successor $\alpha+1$. We invoke the exact triangle
\[ \Gamma_{U_{\leq \alpha}}t\rightarrow \Gamma_{U_{\leq \alpha+1}}t\rightarrow L_{\overline{S}^{\vee}\setminus U_{\leq \alpha}}\Gamma_{U_{\leq \alpha+1}}t\rightarrow \Sigma \Gamma_{U_{\leq \alpha}}t. \]
If we prove that  $L_{\overline{S}^{\vee}\setminus U_{\leq \alpha}}\Gamma_{U_{\leq \alpha+1}}t\in \text{Locid}\bc{t\otimes g(U_{\alpha+1,w}): w \in I_{\alpha+1}}$ we are done. 

Again, we can use Remark~\ref{rmk-liftloctoglobcovervialoc} to reduce to showing the decomposition of this object as an object of the localizing subcategory $L_{\overline{S}^{\vee}\setminus U_{\leq \alpha}}\CT$. In this category, we have an isomorphism
\[ L_{\overline{S}^{\vee}\setminus U_{\leq \alpha}}\Gamma_{U_{\leq \alpha+1}}t\cong \Gamma_{U_{\leq \alpha+1}\setminus U_{\leq \alpha}}L_{\overline{S}^{\vee}\setminus U_{\leq \alpha}}t \]
where on the right-hand side we are invoking the colocalization functor associated with $U_{\leq \alpha+1}\setminus U_{\leq \alpha}$ which is a Thomason subset of $\overline{S}^{\vee}\setminus U_{\leq \alpha}$. By construction, $U_{\leq \alpha+1}\setminus U_{\leq \alpha}$ consists in the disjoint union of the Thomason subsets $U_{\alpha+1,w}$ for $w \in I_{\alpha+1}$. Thus, like in the case $\alpha=0$, we have a decomposition
\[ \Gamma_{U_{\leq \alpha+1}\setminus U_{\leq \alpha}}L_{\overline{S}^{\vee}\setminus U_{\leq \alpha}}t\cong \coprod_{w \in I_{\alpha+1}}t \otimes g(U_{\alpha+1, w}).\]
Finally, we consider the case when $\alpha$ is a limit ordinal $\lambda$. We can adapt the argument of \cite[Lemma~3.8]{ste-dimfunctions}, exchanging the subspaces $X_{\leq \alpha}$ in the reference for the subspaces $U_{\leq \beta}$ used here, to obtain the isomorphism
\[ \Gamma_{U_{\leq \lambda}}t\cong \hocolim_{\beta < \lambda}\Gamma_{U_{\leq \beta}}t.     \]
At this point, the claim easily follows from the inductive assumption: the fact that $\Gamma_{U_{\leq \beta}}t\in \text{Locid}\bc{t\otimes g(U_{\gamma, w}): \gamma\leq \beta, w \in I_{\gamma}}$ for all ordinals $\beta <\lambda$ implies
\[ \Gamma_{U_{\leq \lambda}}t\in \text{Locid}\langle t \otimes g(U_{\beta, w}): \beta<\lambda, w \in I_{\beta} \rangle=\text{Locid}\bc{t\otimes g(U_{\beta, w}): \beta \leq \lambda, w \in I_{\beta}}. \]
The claim taken for $\alpha=\delta$ together with $\{U_{\alpha,w} \}$ being a refinement of $\{ W_j\}$ gives the local-to-global principle for $t$.
\end{proof}

\begin{remark}
The proofs of \cite[Thm.~3.21]{BHS-stratification} and \cite[Thm.~7.6]{zou} have a motif different from the one of Theorem~\ref{thm-wscsuppimpliesltg}: they are non-constructive and rely on appropriate Rickard idempotents which are guaranteed to exist due to the topological conditions imposed on the ambient Balmer spectrum. 

In our situation, we are not making any assumption on the Balmer spectrum. Instead, we showed that the topological condition on the support of the fixed object is compatible with the topology of the Balmer spectrum in a way that allows us to refine the starting cover by weakly visible subsets. Then, we built explicitly the object in exam using a sequence of exact triangles and colimits involving Rickard idempotents with supports in the refinement. 

This approach was inspired by \cite{ste-dimfunctions}, with the caveat that in that paper the Balmer spectrum was filtered by a dimension function, while here
the filtration on the support of the fixed object which we constructed from the weakly scattered condition does not necessarily come from a dimension function. Recall Remark~\ref{rmk-wscatpseudodim}.
\end{remark}

We now provide concrete applications of  Theorem~\ref{thm-wscsuppimpliesltg}.
\begin{lemma}\label{lem-proconstrclosurenoeth}
Let $K$ be a noetherian subset of a spectral space $X$. Then its constructible closure $\overline{K}^{\flat}$ is a noetherian set.
\end{lemma}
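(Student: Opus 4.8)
The plan is to work with $L:=\overline{K}^{\flat}$ itself. As a constructible-closed, hence proconstructible, subset of $X$, the set $L$ is a spectral space in the induced spectral topology, and its constructible topology coincides with the one induced from $X^{\flat}$ (see \cite{spespa}); by the very definition of $\overline{K}^{\flat}$, the set $K$ is dense in $L$ for the constructible topology. What has to be shown is that $L$ is a noetherian topological space, equivalently that every open subset of $L$ is quasi-compact. I would prove this by contradiction, the guiding idea being that replacing $K$ by $L$ has \emph{saturated} $K$ under constructible limits, so that the noetherianity of $K$ can be transported to $L$.

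So suppose $L$ were not noetherian. Then some open subset of $L$ fails to be quasi-compact, and since the quasi-compact opens form a basis of $L$ one obtains (by taking finite unions of basic quasi-compact opens witnessing this failure) a strictly increasing chain $U_{0}\subsetneq U_{1}\subsetneq\cdots$ of quasi-compact open subsets of $L$. Intersecting with $K$ yields an increasing chain of open subsets of $K$, which stabilizes because $K$ is noetherian; hence there are quasi-compact opens $U\subsetneq U'$ of $L$ with $U\cap K=U'\cap K$. Writing $U=O\cap L$ and $U'=O'\cap L$ with $O\subseteq O'$ quasi-compact open in $X$ --- legitimate, since quasi-compact opens of the subspace $L$ are restrictions of quasi-compact opens of $X$ and one may replace $O'$ by $O\cup O'$ --- one sees that $U'\setminus U=(O'\setminus O)\cap L$ is a non-empty subset of $L\setminus O$, open in $L\setminus O$ (indeed clopen there for the constructible topology, because $O'\setminus O$ is clopen in $X^{\flat}$), and disjoint from $K$, since $(U'\setminus U)\cap K=(U'\cap K)\setminus U=(U\cap K)\setminus U=\emptyset$.

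The crux --- and the step I expect to be the main obstacle --- is to check that $K\setminus O$ remains dense in $L\setminus O$ for the constructible topology; granting this, the non-empty constructible-open set $U'\setminus U$ of $L\setminus O$ is disjoint from the constructible-dense set $K\setminus O$, which is absurd, and we are done. For the density claim the decisive point is that $O$, being quasi-compact open, is constructible and therefore clopen in $X^{\flat}$: given $x\in L\setminus O$ and a basic constructible-open neighbourhood $W=A\cap(X\setminus B)$ of $x$ in $X$ (with $A,B$ quasi-compact open), the set $W\cap(X\setminus O)=A\cap\bigl(X\setminus(B\cup O)\bigr)$ is again a basic constructible-open neighbourhood of $x$, hence meets $K$ because $x\in\overline{K}^{\flat}$, and any such point of $W\cap(X\setminus O)\cap K$ lies in $K\setminus O$.

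It is worth saying why a cruder approach does not work and where the hypotheses really enter. Intersecting a descending chain of closed subsets of $L$ with $K$ achieves nothing by itself, because a closed (or open) subset of $L$ is not recovered from its trace on $K$: in general $L$ has many points not specialization-related to any point of $K$. What makes the argument go through is exactly that $\overline{K}^{\flat}$ is saturated under constructible limits of $K$, together with the fact that a quasi-compact open is clopen for the constructible topology, so that deleting it from $X$ does not spoil the constructible density of $K$. Were the sets $U$ above merely open, rather than quasi-compact open, this last fact would be unavailable, which is precisely why one first passes to a chain of quasi-compact opens.
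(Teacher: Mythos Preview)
Your argument is correct and follows essentially the same route as the paper's proof: reduce to the constructible closure (the paper does this by citing \cite[Prop.~2.5]{ste-dimfunctions} to assume $K$ dense in $X^{\flat}$, you do it by working in $L=\overline{K}^{\flat}$), produce a strictly increasing chain of quasi-compact opens from non-noetherianity, and contradict constructible density of $K$ using that quasi-compact opens are clopen in the patch topology. Your detour through $L\setminus O$ and the density of $K\setminus O$ there is more elaborate than necessary---since $O'\setminus O$ is already clopen in $X^{\flat}$, the set $U'\setminus U=(O'\setminus O)\cap L$ is a non-empty clopen of $L^{\flat}$ disjoint from $K$, which directly contradicts $K$ being dense in $L^{\flat}$---but this is a matter of presentation, not substance.
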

\begin{proof}
By \cite[Prop.~2.5]{ste-dimfunctions} we can assume $K$ is dense in $X^{\flat}$.

Assume $X$ is not noetherian, thus there exists a chain of open subsets
\[ U_1\subsetneq U_2\subsetneq U_3\subsetneq \dots \subsetneq U_n\subsetneq U_{n+1}\subsetneq \dots  \]
which does not stabilize. Without loss of generality, we can assume these $U_i$'s are open and quasi-compact. 

Taking the intersection with $K$ produces a chain of open subsets of $K$
\[ U_1\cap K\subseteq \dots \subseteq U_n \cap K\subseteq U_{n+1}\cap K\subseteq \dots \]
if we prove that $U_n \cap K\subsetneq U_{n+1}\cap K$ for every $n$ we obtain an absurd. The equality $U_n \cap K=U_{n+1}\cap K$ is equivalent to $U_{n+1}\cap U_n^c\subseteq K^c$. But $U_n$ and $U_{n+1}$ are clopen in the patch topology, hence $U_{n+1}\cap U_n^c$ is a non-empty open subset in this topology which is contained in $K^c$. This cannot be since $K$ is dense in $X^{\flat}$.
\end{proof}

\begin{lemma}\label{lem-noethisinvscat}
Let $K$ be a noetherian subset of a spectral space $X$. Then $K$ is scattered with respect to the inverse topology. 
\end{lemma}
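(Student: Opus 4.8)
The plan is to verify scatteredness directly from its definition: I will show that every nonempty subset $C\subseteq K$ contains a point that is isolated in $C$ for the subspace topology induced from the inverse topology of $X$. As recalled in the proof of Theorem~\ref{thm-wscsuppimpliesltg}, the open subsets of $X^{\vee}$ are exactly the Thomason subsets of $X$, so the task reduces to producing, for each nonempty $C\subseteq K$, a point $w\in C$ and a Thomason subset $T\subseteq X$ with $T\cap C=\{w\}$. Observe at the outset that $C$, being a subspace of the noetherian space $K$, is itself noetherian, and that $C$ is $T_0$ because $X$ is.

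I would first pin down the candidate point $w$. Since $C$ is noetherian, its collection of closed subsets satisfies the descending chain condition, so $C$ has a minimal nonempty closed subset $F$; minimality forces $F$ to be irreducible and each of its points to be a generic point of $F$, and since $C$ is $T_0$ this leaves $F$ a single point $\{w\}$, so that $w$ is a closed point of the subspace $C$. I would then carve $\{w\}$ out of $C$ using the spectral structure of $X$: because $\{w\}$ is closed in $C$, the complement $C\setminus\{w\}$ is open in $C$, hence equals $U\cap C$ for some open $U\subseteq X$; expressing $U$ as a union of quasi-compact open subsets of $X$ and using that $C\setminus\{w\}$ is quasi-compact---every subspace of a noetherian space is---I may extract a finite subunion and obtain a quasi-compact open $Q\subseteq X$ with $Q\cap C=C\setminus\{w\}$. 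Setting $T:=X\setminus Q$, which is closed with quasi-compact open complement and therefore a Thomason subset of $X$, we get $T\cap C=\{w\}$; hence $\{w\}$ is open in $C$ for the induced inverse topology, i.e.\ $w$ is isolated there. As $C$ was an arbitrary nonempty subset of $K$, this proves that $K$ is scattered with respect to the inverse topology.

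The one step that needs genuine attention is the extraction of a finite subunion, which hinges on the quasi-compactness of $C\setminus\{w\}$---and more generally of every subspace of a noetherian space; the remaining ingredients are just the description of the inverse topology and elementary point-set topology. A possible variant is to invoke Lemma~\ref{lem-proconstrclosurenoeth} first, replacing $X$ by the noetherian spectral space $\overline{K}^{\flat}$---in which the Thomason subsets coincide with the specialization-closed subsets---and then take $T=\overline{\{w\}}$ outright; this route, however, requires identifying the inverse topology of the pro-constructible subspace $\overline{K}^{\flat}$ with the one induced from $X^{\vee}$, so the direct argument seems preferable.
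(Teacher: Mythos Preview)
Your argument is correct, and it takes a genuinely different route from the paper. The paper proceeds indirectly: it first invokes \cite[Prop.~2.5]{ste-dimfunctions} together with Lemma~\ref{lem-proconstrclosurenoeth} to see that the constructible closure $\overline{K}^{\flat}$ is a noetherian spectral space, then appeals to \cite[Lemma~7.17]{supp-sanders} to conclude that $\overline{K}^{\flat}$ is scattered in $X^{\vee}$, and finally observes that $K$ inherits scatteredness as a subset. Your approach is instead a direct, self-contained verification: pick a closed point $w$ of an arbitrary nonempty $C\subseteq K$ (available because $C$ is noetherian and $T_0$), then exploit the quasi-compactness of $C\setminus\{w\}$ to trap it inside a single quasi-compact open $Q$, so that the Thomason set $X\setminus Q$ isolates $w$ in the inverse topology. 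The paper's proof is shorter on the page but leans on two external black boxes; your proof unpacks the mechanism completely and in particular makes transparent \emph{why} noetherianity of $K$ is exactly what is needed---it furnishes both the closed point and the finite subcover. Amusingly, the variant you sketch at the end (pass to $\overline{K}^{\flat}$ and take $T=\overline{\{w\}}$) is essentially the paper's strategy, and your caveat about matching the induced inverse topology is a fair point that the paper leaves implicit.
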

\begin{proof}
By \cite[Prop.~2.5]{ste-dimfunctions} and Lemma~\ref{lem-proconstrclosurenoeth} we have $\overline{K}^{\flat}$ is a noetherian spectral space. \cite[Lemma~7.17]{supp-sanders} implies $\overline{K}^{\flat}$ is scattered in $X^{\vee}$. Therefore, $K$ is also scattered in $X^{\vee}$, being a subset of a scattered set.
\end{proof}

\begin{corollary}\label{cor-suppscatteredinprofinitesp}
Suppose $\emph{Supp}(t)\subseteq \emph{Spc}(\CT^c)$ is a noetherian subspace. Then the local-to-global principle holds for $t$.
\end{corollary}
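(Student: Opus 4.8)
The plan is to chain together the two lemmas and the main theorem that have just been proved. The statement is a formal consequence, so there is no real work left once the right implications are lined up.

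First I would apply Lemma~\ref{lem-noethisinvscat} with $X = \text{Spc}(\CT^c)$ and $K = \text{Supp}(t)$: since $\text{Supp}(t)$ is noetherian as a subspace of the spectral space $\text{Spc}(\CT^c)$, the lemma tells us that $\text{Supp}(t)$ is scattered with respect to the inverse topology $\text{Spc}(\CT^c)^{\vee}$.

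Next I would observe that scattered implies weakly scattered. Unwinding the definitions, an isolated point $w$ of a closed subset $F$ (i.e.\ $\{w\}$ open in $F$) is in particular a weakly isolated point, since one can find an open $U$ with $w \in U \cap F \subseteq \{w\} \subseteq \overline{\{w\}}$; hence every nonempty closed subset of $\text{Supp}(t)$ having an isolated point certainly has a weakly isolated point, which is exactly what "weakly scattered'' demands. So $\text{Supp}(t)$ is weakly scattered in $\text{Spc}(\CT^c)^{\vee}$.

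Finally I would invoke Theorem~\ref{thm-wscsuppimpliesltg}, whose hypothesis is precisely that $\text{Supp}(t)$ be weakly scattered in the inverse topology, to conclude that the local-to-global principle holds for $t$. The only "obstacle'' here is the trivial verification that scattered implies weakly scattered; everything substantive has been absorbed into Lemma~\ref{lem-noethisinvscat} and Theorem~\ref{thm-wscsuppimpliesltg}.
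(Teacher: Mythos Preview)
Your proof is correct and matches the paper's own argument essentially verbatim: the paper simply says the result is immediate from Theorem~\ref{thm-wscsuppimpliesltg} and Lemma~\ref{lem-noethisinvscat}. Your extra sentence explaining why scattered implies weakly scattered is a harmless elaboration of what the paper leaves implicit.
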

\begin{proof}
Immediate from Theorem~\ref{thm-wscsuppimpliesltg} and Lemma~\ref{lem-noethisinvscat}.
\end{proof}

This corollary solves \cite[Question~21.8]{BCHS-costratification}. Theorem~\ref{thm-wscsuppimpliesltg} provides also a local version of \cite[Thm.~5.6]{ste-dimfunctions}.
\begin{corollary}
Suppose $\emph{Spc}(\CT^c)$ carries the constructible topology and $\emph{Supp}(t)\subseteq \emph{Spc}(\CT^c)$ is a scattered subset. Then the local-to-global principle holds for $t$.
\end{corollary}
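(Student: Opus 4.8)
The plan is to reduce the constructible-topology hypothesis to a statement about the inverse topology, and then quote Theorem~\ref{thm-wscsuppimpliesltg}. The key observation is that when $\text{Spc}(\CT^c)$ already carries the constructible (patch) topology, it is a spectral space in which every point is closed --- indeed a profinite space, since the patch topology on a spectral space is compact Hausdorff totally disconnected --- so the patch topology agrees with its own inverse topology (a Boolean space is its own Hochster dual). Concretely, a ``scattered subset with respect to the given (constructible) topology'' is literally the same thing as a scattered subset with respect to the inverse topology of that space. Hence any scattered $\text{Supp}(t)$ is in particular weakly scattered in the inverse topology, and Theorem~\ref{thm-wscsuppimpliesltg} applies verbatim.

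First I would record the topological fact: if a spectral space $X$ carries the constructible topology, then $X=X^{\flat}=X^{\vee}$ as topological spaces, because the patch topology is Boolean (compact, Hausdorff, totally disconnected) and a Boolean space, viewed as a spectral space, is self-dual under Hochster duality --- every quasi-compact open is clopen, so the complements of Thomason subsets are exactly the quasi-compact opens, which form the same lattice. (Alternatively one can cite \cite[Prop.~2.5]{ste-dimfunctions} together with the characterization of the inverse topology used in the earlier lemmas.) Second, I would note that scattered implies weakly scattered for any topological space: in a scattered set every nonempty (locally) closed subset has an isolated point, which is in particular a weakly isolated point in the sense used in Proposition~\ref{prop-refinementcoverweaklyscat}. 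Third, combining these two points, $\text{Supp}(t)$ being scattered in $\text{Spc}(\CT^c)=\text{Spc}(\CT^c)^{\flat}=\text{Spc}(\CT^c)^{\vee}$ is weakly scattered in $\text{Spc}(\CT^c)^{\vee}$, so Theorem~\ref{thm-wscsuppimpliesltg} gives the local-to-global principle for $t$.

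I expect the only subtlety --- and it is minor --- to be bookkeeping about which ``topology'' the word \emph{scattered} refers to in the hypothesis, and making sure the self-duality identification $X^{\flat}=X^{\vee}$ is stated at the right level of generality (for a genuinely spectral $X$ one needs $X$ to already be Boolean, which is exactly the content of ``$\text{Spc}(\CT^c)$ carries the constructible topology''). There is no genuine analytic or homotopy-theoretic obstacle here; the corollary is a pure packaging of Theorem~\ref{thm-wscsuppimpliesltg} once the topological dictionary is in place. I would therefore keep the proof to two or three lines, citing the self-duality of Boolean spaces and Theorem~\ref{thm-wscsuppimpliesltg}.
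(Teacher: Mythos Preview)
Your proposal is correct and follows essentially the same approach as the paper: the paper's proof is a one-liner observing that when $\text{Spc}(\CT^c)$ carries the constructible topology the usual and inverse topologies coincide, so the hypothesis feeds directly into Theorem~\ref{thm-wscsuppimpliesltg}. Your additional remark that scattered implies weakly scattered is a harmless elaboration of the same reduction.
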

\begin{proof}
It follows immediately from Theorem~\ref{thm-wscsuppimpliesltg} and the fact that on $\text{Spc}(\CT^c)$ the usual topology and the inverse topology coincide, since the Balmer spectrum has the constructible topology.
\end{proof}

\begin{remark}
We will adopt the notation of \cite[\S4.3]{spespa} regarding Cantor-Bendixson derivatives and the definition of perfect space.

We warn the reader that in this convention a perfect space is a space with no isolated points. A different choice, often found in the literature, is to call a space with no isolated points by \textit{dense-in-itself} and to use perfect to denote the closed dense-in-itself subset of a space, which coincides with the maximal dense-in-itself subset of the space.

Moreover, given a scattered set $C$ we denote by $C_{\leq \alpha}$ the subset of its elements with Cantor-Bendixson rank lesser or equal to $\alpha$. This follows the notation of \cite[Def.~5.1]{ste-dimfunctions}.
\end{remark}

\begin{example}\label{ex-orthogonalperfectsupport}
We assume here that the Balmer spectrum $\text{Spc}(\CT^c)$ is weakly noetherian. Suppose that $\CT$ does not satisfy the local-to-global principle. By \cite[Thm.~6.4]{BCHS-costratification}, this is equivalent to the subcategory
\[ \text{Locid}\bc{g(\fp): \fp \in \text{Spc}(\CT^c)}^{\perp} \] 
not being zero. We claim that the support of any non-zero object of such subcategory must be a perfect subspace of $\text{Spc}(\CT^c)^{\vee}$.

Indeed, let $t$ be one of such objects with $S=\text{Supp}(t)$. We work with the inverse topology and suppose the Cantor-Bendixson derivative $\delta S$ is a proper subset of $S$. This means that we have a decomposition $S=\delta S \sqcup S_{\leq 0}$ where $S_{\leq 0}$ is the set of isolated points of $S$. Then we can repeat the argument of Theorem~\ref{thm-wscsuppimpliesltg} to obtain the exact triangle
\[ \coprod_{\fp \in S_{\leq 0}}g(\fp)\otimes t \rightarrow t \rightarrow L_{\overline{S}^{\vee} \setminus S_{\leq 0}}t \rightarrow \Sigma  \coprod_{\fp \in S_{\leq 0}}g(\fp)\otimes t. \]
By the definition of $t$, the first morphism must be trivial, hence the triangle splits giving us
\[ L_{\overline{S}^{\vee} \setminus S_{\leq 0}}t \cong t \oplus \Sigma \coprod_{\fp \in S_{\leq 0}}g(\fp)\otimes t. \]
Observe that for any $\fp \in S_{\leq 0}$, this prime belongs to the support of the object on the right-hand side, while it does not belong to the support of the object on the left-hand side. This is an absurd, hence we conclude $S=\delta S$.
\end{example}

\section{Injective superdecomposable modules over non semi-artinian absolutely flat rings}
In this last section, we apply Corollary~\ref{cor-suppscatteredinprofinitesp} to the study of the support of injective superdecomposable modules over non semi-artinian absolutely flat rings. We refer the reader to \cite{ste-derabs} for the importance these objects have in the theory of tensor triangular geometry: they provide examples of objects not satisfying the local-to-global principle.

From now on, all rings which we consider will be unital and commutative.

Let $R$ be an absolutely flat ring which is not semi-artinian. The Balmer spectrum $\text{Spc}(\text{D}(R)^c)$ carries the constructible topology, hence it is weakly noetherian and the tensor triangular support coincides with the Balmer-Favi support. In \cite[Thm.~4.7]{ste-derabs} it is proved that any injective superdecomposable $R$-module is right orthogonal to the idempotents $g(\fp)$. Therefore, Example~\ref{ex-orthogonalperfectsupport} implies that its support must be a perfect subset of $\text{Spc}(\text{D}(R)^c)$.

We will present an injective superdecomposable $R$-module whose support coincides with the maximal perfect subset of $\text{Spc}(\text{D}(R)^c)$.

Before starting, we recall some basic facts which we will use through this section:
\begin{itemize}
\item[(a)] as consequence of \cite[Thm.~4.1]{thomason-classification}, there is a homeomorphism between the Zariski spectrum $\text{Spec}(A)$ of a ring $A$ and the Balmer spectrum $\text{Spc}(\text{D}(A)^c)$ given by 
\begin{align*}
\text{Spec}(A)&\cong \text{Spc}(\text{D}(A)^c) \\
P&\mapsto \mathfrak{p}=\{ M \in \text{D}(A)^c : M_P\simeq 0 \}
\end{align*}
where $M_P$ denotes the localization at the prime ideal $P$.

In the case of a general ring, one should be careful because this map is inclusion reversing. However, in the case of absolutely flat rings there are no proper inclusions of prime ideals. Therefore, from now on we will identify the Balmer spectrum of $\text{D}(R)$ with $\text{Spec}(R)$.

\item[(b)] For an absolutely flat ring $A$, given a generic element $a \in A$ it is possible to concoct an idempotent element $a' \in A$ such that $(a)=(a')$. Thus, all the principal ideals of $A$ are generated by idempotents. See \cite[Lemma~2]{olivier-absflatrings}.

\item[(c)] Given an ideal $I \subseteq A$ of an absolutely flat ring, the quotient $A/I$ is also absolutely flat. This guarantees that all the rings we will construct later will remain absolutely flat.
\end{itemize}

\begin{lemma} \label{lem-isolated<>simple}
Let $A$ be an absolutely flat ring. Then there is a bijection
\[ \{\emph{non-zero minimal ideals of} \ A\} \leftrightarrow \{\emph{isolated points of} \ \emph{Spec}(A)\}.
  \]
\end{lemma}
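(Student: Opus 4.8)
The plan is to exhibit the bijection by sending a nonzero minimal ideal $I \subseteq A$ to a point of $\text{Spec}(A)$, and conversely. First I would use fact (b): since $A$ is absolutely flat, any nonzero minimal ideal $I$ is principal, generated by a nonzero idempotent $e$. Minimality forces $I = (e) \cong A/(1-e)$ to be a \emph{simple} $A$-module; hence $A/(1-e)$ is a field, so $(1-e)$ is a maximal ideal, and in fact $(1-e) = \text{Ann}(e)$ is the unique prime not containing $e$ (because $A$ being absolutely flat, $A \cong (e) \times (1-e)$ as rings, and $(e)$ is a field). So to $I = (e)$ I associate the prime $P_I = (1-e)$. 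The basic open set $D(e) = \{ Q \in \text{Spec}(A) : e \notin Q \}$ then equals $\{P_I\}$, a single point, which is therefore open; since absolutely flat rings have Krull dimension zero, every point is also closed, so $\{P_I\}$ is clopen, in particular $P_I$ is isolated.

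Conversely, given an isolated point $P \in \text{Spec}(A)$, I would produce a nonzero minimal ideal. Isolated means $\{P\}$ is open, and since $\text{Spec}(A)$ is spectral with a basis of quasi-compact opens $D(a)$, we get $\{P\} = D(a)$ for some $a \in A$; by (b) we may take $a = e$ idempotent. Then $D(e) = \{P\}$ means $e \notin P$ but $e \in Q$ for every other prime $Q$, i.e.\ $(1-e)$ lies in every prime except $P$, so $(1-e)$ is contained in the intersection of all primes $\neq P$. Using the ring decomposition $A \cong (e) \times (1-e)$ again, the factor $(e)$ is a ring all of whose primes pull back to $P$; since $\dim A = 0$ this factor $(e)$ is a zero-dimensional local ring which I claim is a field — here I would invoke that $A$, hence its quotient/factor $(e)$, is absolutely flat and reduced, and a reduced zero-dimensional local ring is a field. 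Therefore $(e)$ is a simple $A$-module, i.e.\ a nonzero minimal ideal, and it is the one attached to $P$.

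Finally I would check the two assignments are mutually inverse: starting from $I = (e)$ we built $P_I = (1-e)$ and then $D(1 - (1-e)) = D(e) = \{P_I\}$ recovers the idempotent $e$ and hence $I$; starting from isolated $P$ with $\{P\} = D(e)$ we built the minimal ideal $(e)$ and then its associated prime is $(1-e)$, which is exactly $P$ since $D(e) = \{P\}$ forces $P = V(e) = (1-e)$-primary, and in dimension zero $V(e)$ with $D(e)$ a single point pins down $P = (1-e)$. Well-definedness (independence of the chosen idempotent generator) follows because the idempotent generating a given principal ideal in an absolutely flat ring is unique: if $(e) = (e')$ with $e, e'$ idempotent then $e = e'e = e'$.

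The main obstacle I expect is the clean identification of minimality of the ideal with the factor ring being a field, and dually of isolatedness with the complementary idempotent; the rest is bookkeeping with idempotents. Concretely, the step that deserves care is showing that an isolated point yields a \emph{minimal} (not merely principal) ideal — this is where reducedness of $A$ (absolutely flat $\Rightarrow$ von Neumann regular $\Rightarrow$ reduced) together with zero-dimensionality is essential, since it guarantees the local factor $(e)$ has no nilpotents and hence, being local artinian-like, is a field rather than a more complicated local ring.
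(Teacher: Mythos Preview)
Your proof is correct and exhibits the same bijection $(e) \leftrightarrow (1-e)$ as the paper, with the forward direction essentially identical. For the backward direction (isolated point $\Rightarrow$ minimal ideal) you take a more structural route: the ring factor $eA \cong A/(1-e)$ has spectrum the singleton $D(e)$, hence is a reduced zero-dimensional local ring and therefore a field, giving minimality of $(e)$ at once. The paper instead argues with elements: it shows $(a) \cap P \subseteq \bigcap_{M} M = \text{nil}(A) = 0$, deduces $P = (1-a)$, and then checks that any proper nonzero subideal $I \subsetneq (a)$ would satisfy $I + P = A$, whence an element chase forces $a \in I$. Both arguments ultimately rest on reducedness of $A$; yours packages it as ``reduced, local, dimension zero $\Rightarrow$ field'', the paper's as ``nilradical $= 0$''. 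Your version is a little cleaner conceptually; the paper's is more self-contained and avoids invoking the structure of zero-dimensional local rings. One small wrinkle in your write-up: the phrase ``$P = V(e) = (1-e)$-primary'' conflates a closed set with an ideal --- what you actually need is that $e \notin P$ together with $e(1-e) = 0 \in P$ forces $1-e \in P$, and since $(1-e)$ is maximal (as $A/(1-e)$ is a field), $P = (1-e)$.
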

\begin{proof}
Suppose we start with a simple ideal $I$, this can be written as $(a)$ for a non-zero idempotent element $a\in A$. Thus $1-a$ is the orthogonal idempotent, we set $J=(1-a)$ and claim this is a maximal ideal. By construction we have $A=I+J$, now we show $I\cap J=0$. An element of $I\cap J$ can be written as $x=\lambda a=\mu(1-a)$, but using the fact that $a$ is idempotent we have $x=ax=\mu a(1-a)=0$. Hence we proved $A=I\oplus J$.

We now show $J$ is maximal. Suppose we have $J\subsetneq M$ for another ideal $M$, then taking the intersection of $M$ with $A$ we get $M=IM\oplus JM$. Since  $J\subsetneq M$ it must follow $IM\neq 0$, thus there exists $x\in M$ such that $ax\neq 0$. By minimality of $I$ it follows $(a)=(ax)$ hence $IM=I$. We have both $a$ and $1-a$ belong to $M$, thus $M=A$. This shows $J$ must be a maximal ideal.

By construction, we have $J\in U(a)$. If we prove $U(a)=\{J\}$ this proves the prime is isolated in the Zariski spectrum. Let $M$ be a maximal ideal different from $J$. By taking the intersection of $M$ with the decomposition $A=I\oplus J$ we get $M=IM\oplus JM$, if $IM=0$ we deduce $M\subseteq J$ contradicting the maximality of $M$. Hence $IM\neq 0$, this mean there exists $x\in M $ with $ax\neq 0$. By minimality of $I=(a)$, we deduce $(a)=(ax)\subseteq M$, thus $M\in V(a)$. Therefore the only prime ideal in $U(a)$ is $J$.

Suppose now we start from $P$ an isolated prime ideal. The Zariski spectrum has a basis given by $U(x)$ for $x\in A$. Since $U(x)\cap U(y)=U(xy)$, we have that $P$ is isolated if and only if $\{P\}=U(a)$ for some non-zero $a\in A$. Since $U(a)$ depends only on the ideal generated by $a$ and $A$ is absolutely flat, we can assume $a$ is idempotent. Observe that $R=(a)\oplus (1-a)$ as before. Moreover, $a(1-a)=0\in P$ and $P$ being prime implies $1-a\in P$. Thus $(1-a)\subseteq P$.

We now prove $(a)\cap P=0$. First observe that for any maximal ideal $M$ different from $P$ it holds $(a)\subseteq M$. Indeed, if $U(a)=\{P\}$ then $V(a)=\{M\in \text{Spec}(A) : M\neq P\}$. Therefore, we have
\[(a)\subseteq \bigcap_{M\neq P}M\]
and it follows
\[(a)\cap P\subseteq \bigcap_{M\in \text{Spec}(A)}M.\]
But the intersection of all prime ideals coincides with the nilradical of $A$, this ring is reduced being absolutely flat hence the nilradical is $0$. We conclude $(a)\cap P=0$.

Thus $(1-a)=P$, since $\forall p\in P$ we have $p=pa +p(1-a)=p(1-a)$. From the maximality of $P$ it follows $(a)$ is minimal. Indeed, suppose $I\subsetneq (a)$ is not zero, then $P+I$ is an ideal of $A$ which is strictly bigger than $P$, thus $I+P=A$. It follows $a\in I +P$, i.e.\ $a=ax+p$ with $ax\in I$ and $p\in P$. But $a=a^2=a^2x+ap=ax\in I$ and we conclude $I=(a)$. Thus $(a)$ is non-zero minimal.
\end{proof}

We give the following notation for the Loewy series of modules over a ring. We refer to \cite[Ch.~VIII \S2]{ringquot} for an exhaustive treatment.
\begin{definition}
Let $B$ be an arbitrary ring and let $M$ be a $B$-module. We set $sM$ to be the socle of $M$. Recall that the socle is defined to be the sum of all simple submodules of the module in question. We set $s_0M=0$ and $s_1M=sM$. Now we define recursively $s_{\alpha}M$, a submodule of $M$, for every ordinal $\alpha$. 

Suppose we defined $s_{\alpha}M$, then we set $s_{\alpha+1}M$ to be the submodule of $M$ such that $s_{\alpha+1}M/s_{\alpha}M$ is the socle of $M/s_{\alpha}M$. I.e.\ $s_{\alpha+1}M/s_{\alpha}M=s(M/s_{\alpha}M)$.

If $\lambda$ is a limit ordinal, then we set $s_{\lambda}M=\bigcup_{\alpha<\lambda}s_{\alpha}M$.

This sequence of submodules is called the \textit{Loewy series} of $M$.  The \textit{Loewy length} of $M$ is defined to be the minimal ordinal $\delta$ such that $s_{\delta}M=M$, if such an ordinal exists.

The ring $B$ being semi-artinian is equivalent to the existence of the Loewy length for $B$, considered as $B$-module. See \cite[Ch.~VIII \S2 Prop.~2.5]{ringquot}.
\end{definition}

\begin{lemma}\label{lem-socle<>CBder}
Let $A$ be an absolutely flat ring. Then we have
\[ \emph{Spec}(A/sA)\cong \{P\in \emph{Spec}(A): sA\subseteq P\}\cong \delta \emph{Spec}(A). \]
\end{lemma}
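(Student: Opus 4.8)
The plan is to split the asserted chain of homeomorphisms into its two links. The first, $\text{Spec}(A/sA)\cong\{P\in\text{Spec}(A)\colon sA\subseteq P\}$, is just the standard homeomorphism between the spectrum of a quotient ring and the closed subset $V(sA)$ it cuts out of $\text{Spec}(A)$, implemented by $\overline{P}\mapsto\pi^{-1}(\overline{P})$ for the quotient map $\pi\colon A\to A/sA$; this needs no argument. So the whole content sits in the second link, and I would prove it as an equality of \emph{subsets} $V(sA)=\delta\text{Spec}(A)$ of $\text{Spec}(A)$. Since each side then carries the subspace topology, equality of the underlying sets automatically upgrades to the claimed homeomorphism.

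To identify $V(sA)$, first I would make $sA$ explicit. By definition the socle $sA$ is the sum of the simple $A$-submodules of $A$, i.e.\ the sum of the non-zero minimal ideals of $A$; by fact~(b) each such ideal is $(a)$ for some non-zero idempotent $a$. The proof of Lemma~\ref{lem-isolated<>simple} shows that the isolated point of $\text{Spec}(A)$ attached to $(a)$ is $J_a=(1-a)$ and, crucially, that $U(a)=\{J_a\}$, i.e.\ $V\big((a)\big)=\text{Spec}(A)\setminus\{J_a\}$. Moreover the bijection of Lemma~\ref{lem-isolated<>simple} says that as $(a)$ runs over all non-zero minimal ideals of $A$, the point $J_a$ runs over precisely the isolated points of $\text{Spec}(A)$.

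Then I would just compute, writing $sA=\sum_{(a)}(a)$ with $(a)$ ranging over the non-zero minimal ideals:
\[
V(sA)=\bigcap_{(a)}V\big((a)\big)=\bigcap_{(a)}\big(\text{Spec}(A)\setminus\{J_a\}\big)=\text{Spec}(A)\setminus\{J_a\colon (a)\ \text{minimal}\}=\delta\text{Spec}(A),
\]
the last step using that $\text{Spec}(A)$ is $T_1$ (Hausdorff, in fact Boolean, since $A$ is absolutely flat), so that $\delta\text{Spec}(A)$ is exactly the set of non-isolated points, which by the previous paragraph is the complement of the set $\{J_a\}$ of all isolated points.

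I do not expect a real obstacle: the argument is bookkeeping on top of Lemma~\ref{lem-isolated<>simple}. The only point demanding a little care is that $sA$ may be an infinite sum of minimal ideals, so that $V(sA)$ is an infinite intersection of sets of the form ``$\text{Spec}(A)$ minus one point''; to be sure this intersection deletes every isolated point and no non-isolated point, one uses both directions of the correspondence in Lemma~\ref{lem-isolated<>simple} --- surjectivity onto the isolated points to reach them all, and the explicit identity $U(a)=\{J_a\}$ (rather than merely injectivity) to guarantee nothing else is removed.
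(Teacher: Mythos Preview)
Your proof is correct and follows essentially the same approach as the paper: both arguments reduce to Lemma~\ref{lem-isolated<>simple} and the description $sA=\sum_{I\ \text{simple}}I$, deducing that $sA\subseteq P$ precisely when $P$ is non-isolated. The only difference is packaging---you compute $V(sA)=\bigcap_{(a)}V\big((a)\big)$ in one line using $U(a)=\{J_a\}$, whereas the paper verifies the biconditional ``$sA\subseteq P\iff P$ not isolated'' by checking the two contrapositives elementwise; your remark about $T_1$ is harmless but not actually needed, since $\delta X$ is by definition the complement of the isolated points.
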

\begin{proof}
We have to show that $sA\subseteq P$ if and only if $P$ is not isolated in $\text{Spec}(A)$.

Suppose we have $P$ an isolated prime. Then in Lemma~\ref{lem-isolated<>simple} we saw there is a decomposition $A=P\oplus I$ with $I$ a simple module. Since $I\subseteq sA$ it follows $sA\not\subseteq P$.

Now suppose the prime $P$ does not contain $sA$. By definition, $sA=\sum_{I \ \text{simple}}I$ hence there must exit a simple module $I$ such that $I\not\subseteq P$. In Lemma~\ref{lem-isolated<>simple} we saw that $I=(a)$ with $a\in A$ idempotent and there is a decomposition $A=(a)\oplus (1-a)$. Again $P=(1-a)$ and $\{P\}=U(a)$, thus $P$ is isolated.
\end{proof}

This result allows us to produce a correspondence between Loewy series and Cantor-Bendixson derivatives.
\begin{proposition}\label{prop-specsocle=CBder}
Let $A$ be an absolutely flat ring. Then for any ordinal $\alpha$ we have
\[ \emph{Spec}(A/s_{\alpha}A)\cong \delta^{\alpha}\emph{Spec}(A).  \]
\end{proposition}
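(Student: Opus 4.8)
The plan is a transfinite induction on $\alpha$, and the one bookkeeping decision that makes it run smoothly is to formulate everything as an identity of \emph{subspaces} of $\text{Spec}(A)$. Namely, identifying $\text{Spec}(A/I)$ with the closed subset $V(I)=\{P\in\text{Spec}(A):I\subseteq P\}$ (with its subspace topology) for every ideal $I$, I would prove the more precise statement that $V(s_\alpha A)=\delta^\alpha\text{Spec}(A)$ as subsets of $\text{Spec}(A)$. The ingredients are: fact (c), so that every quotient $A/s_\alpha A$ is again absolutely flat and Lemma~\ref{lem-socle<>CBder} is available for it; the third isomorphism theorem; the defining recursion of the Loewy series; and the elementary fact that $V$ carries ascending unions (and sums) of ideals to intersections of closed sets. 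The base case $\alpha=0$ is immediate, since $s_0A=0$ forces $V(s_0A)=\text{Spec}(A)=\delta^0\text{Spec}(A)$.

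For the successor step, assume $V(s_\alpha A)=\delta^\alpha\text{Spec}(A)$ and set $A'=A/s_\alpha A$, which is absolutely flat by fact (c). Lemma~\ref{lem-socle<>CBder} applied to $A'$ gives, inside $\text{Spec}(A')$, the identity $\{Q:sA'\subseteq Q\}=\delta\,\text{Spec}(A')$. By the defining recursion of the Loewy series, $sA'=s_{\alpha+1}A/s_\alpha A$, so under the canonical identification $\text{Spec}(A')=V(s_\alpha A)$ the left-hand side becomes $V(s_{\alpha+1}A)$: primes of $A'$ containing $s_{\alpha+1}A/s_\alpha A$ correspond to primes of $A$ containing $s_{\alpha+1}A$, and equivalently $A'/sA'\cong A/s_{\alpha+1}A$ by the third isomorphism theorem. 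The right-hand side is $\delta\,\text{Spec}(A')=\delta\bigl(V(s_\alpha A)\bigr)=\delta\bigl(\delta^\alpha\text{Spec}(A)\bigr)=\delta^{\alpha+1}\text{Spec}(A)$, using the inductive hypothesis, the fact that the Cantor--Bendixson derivative depends only on the subspace topology, and the defining recursion $\delta^{\alpha+1}=\delta\circ\delta^\alpha$. Hence $V(s_{\alpha+1}A)=\delta^{\alpha+1}\text{Spec}(A)$.

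For a limit ordinal $\lambda$ we have $s_\lambda A=\bigcup_{\beta<\lambda}s_\beta A$, so a prime contains $s_\lambda A$ if and only if it contains every $s_\beta A$; thus $V(s_\lambda A)=\bigcap_{\beta<\lambda}V(s_\beta A)=\bigcap_{\beta<\lambda}\delta^\beta\text{Spec}(A)=\delta^\lambda\text{Spec}(A)$ by the inductive hypothesis and the definition of the iterated derivative at limits (\cite[\S4.3]{spespa}). This closes the induction.

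The step that needs the most care is the successor case: one must ensure that the identification furnished by the inductive hypothesis is compatible with the ambient space $\text{Spec}(A)$, i.e.\ that ``$\delta$ of $\text{Spec}(A/s_\alpha A)$'' really is the subset $\delta^{\alpha+1}\text{Spec}(A)$ of $\text{Spec}(A)$ and not merely a space homeomorphic to it. This is precisely why the induction is run at the level of subspaces, and why the ``as subspaces'' form of Lemma~\ref{lem-socle<>CBder} together with fact (c) are doing the essential work; everything else is routine manipulation of quotient rings and their spectra.
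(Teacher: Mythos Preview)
Your proof is correct and follows essentially the same route as the paper: transfinite induction on $\alpha$, with the successor step handled by applying Lemma~\ref{lem-socle<>CBder} to the absolutely flat quotient $A/s_\alpha A$ and the limit step by the elementary observation that $V(\bigcup_\beta s_\beta A)=\bigcap_\beta V(s_\beta A)$. Your explicit framing of the inductive hypothesis as the equality $V(s_\alpha A)=\delta^\alpha\text{Spec}(A)$ of subspaces of $\text{Spec}(A)$ is exactly the identification the paper uses (and spells out in the limit step), so there is no substantive difference.
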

\begin{proof}
The argument is a basic transfinite induction, using Lemma~\ref{lem-socle<>CBder}.

If $\alpha=0$ the claim is trivial. Suppose we showed the claim for a general ordinal $\alpha$. Then we have
\begin{align*}
 \delta^{\alpha+1}\text{Spec}(A)&=\delta \delta^{\alpha}\text{Spec}(A)=\delta \text{Spec}(A/s_{\alpha}A)=\\
&=\text{Spec}\bigg( (A/s_{\alpha}A)/(s_{\alpha+1}A/s_{\alpha}A) \bigg)\cong \text{Spec}(A/s_{\alpha+1}A)  
\end{align*}
where the second equality comes from the induction assumption, the third relies on Lemma~\ref{lem-socle<>CBder} and the definition of $s_{\alpha+1}A$.

Now suppose $\lambda$ is a limit ordinal. By definition $\delta^{\lambda}\text{Spec}(A)=\bigcap_{\alpha<\lambda}\delta^{\alpha}\text{Spec}(A)$, using the inductive assumption we deduce $\delta^{\lambda}\text{Spec}(A)=\bigcap_{\alpha<\lambda}\text{Spec}(A/s_{\alpha}A)$. Observe that the filtration
\[ 0=s_0A\subseteq s_1A \subseteq \dots \subseteq s_{\alpha}A\subseteq s_{\alpha+1}A\subseteq \dots \subseteq s_{\lambda}A\subseteq \dots \]
induces the sequence of embeddings of spectral spaces
\[ \text{Spec}(A)\hookleftarrow \text{Spec}(A/s_1A)\hookleftarrow \dots \hookleftarrow \text{Spec}(A/s_{\alpha}A)\hookleftarrow \text{Spec}(A/s_{\alpha+1}A)\hookleftarrow \dots \hookleftarrow  \text{Spec}(A/s_{\lambda}A)\hookleftarrow \dots  \]
and recall we have the identification $\text{Spec}(A/s_{\alpha}A)\cong \{P\in \text{Spec}(A) : s_{\alpha}A\subseteq P\}$.

We conclude 
\begin{align*}
\bigcap_{\alpha<\lambda} \text{Spec}(A/s_{\alpha}A)& = \{ P\in \text{Spec}(A): s_{\alpha}A\subseteq P \ \forall \alpha<\lambda \} = \bigg\{ P\in \text{Spec}(A): \bigcup_{\alpha<\lambda} s_{\alpha}A \subseteq P \bigg\}=\\
&=\{ P\in \text{Spec}(A): s_{\lambda}A\subseteq P\}=\text{Spec}(A/s_{\lambda}A). 
\end{align*}
\end{proof}

The immediate consequence of this result is the following:
\begin{corollary}
Let $A$ be an absolutely flat ring. Then it is semi-artinian if and only if $\emph{Spec}(A)$ is scattered.
\end{corollary}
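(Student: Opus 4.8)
The plan is to read the equivalence off Proposition~\ref{prop-specsocle=CBder}, which already records the dictionary between the Loewy filtration of $A$ and the Cantor-Bendixson filtration of $\text{Spec}(A)$. The idea is to match the two conditions through the single ``extreme'' statement that, for some ordinal $\alpha$, the $\alpha$-th stage of each filtration is as large (resp.\ small) as possible. I would first dispose of the case $A=0$, where both conditions hold vacuously, and assume $A\neq 0$ thereafter.

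Concretely, recall from the definition given above (or \cite[Ch.~VIII \S2 Prop.~2.5]{ringquot}) that $A$ is semi-artinian if and only if its Loewy length exists, that is, if and only if $s_\alpha A=A$ for some ordinal $\alpha$; and $s_\alpha A=A$ is the same as $A/s_\alpha A=0$, equivalently $\text{Spec}(A/s_\alpha A)=\emptyset$, since a nonzero unital commutative ring has a prime ideal. Next, recall the standard Cantor-Bendixson fact that a space $X$ is scattered if and only if $\delta^\alpha X=\emptyset$ for some ordinal $\alpha$ (cf.\ \cite[\S4.3]{spespa}). Combining these with the homeomorphism $\text{Spec}(A/s_\alpha A)\cong\delta^\alpha\text{Spec}(A)$ of Proposition~\ref{prop-specsocle=CBder} yields the chain of equivalences: $A$ is semi-artinian $\iff$ $\text{Spec}(A/s_\alpha A)=\emptyset$ for some ordinal $\alpha$ $\iff$ $\delta^\alpha\text{Spec}(A)=\emptyset$ for some ordinal $\alpha$ $\iff$ $\text{Spec}(A)$ is scattered.

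I do not expect a genuine obstacle: the substance is entirely in the earlier results, so the corollary really is an immediate consequence of Proposition~\ref{prop-specsocle=CBder}. The only points worth a moment's care are the elementary translation ``$s_\alpha A=A\iff\text{Spec}(A/s_\alpha A)=\emptyset$'' and the standard fact that scatteredness of a (spectral) space amounts to the vanishing of some transfinite Cantor-Bendixson derivative — the latter being precisely what lets us invoke Proposition~\ref{prop-specsocle=CBder} at one and the same ordinal on both sides. Both are routine, and no hypothesis beyond what is stated is needed.
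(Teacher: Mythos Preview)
Your proposal is correct and follows exactly the approach the paper intends: the corollary is stated as an immediate consequence of Proposition~\ref{prop-specsocle=CBder}, and your chain of equivalences spells out precisely that deduction. There is nothing to add.
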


\begin{remark}
The correspondence of Proposition~\ref{prop-specsocle=CBder} is not new to the literature. Indeed, \cite[Cor.~5.3.60]{prest} implies that for an absolutely flat ring, under a technical condition, the Cantor-Bendixson rank and the Krull-Gabriel dimension (related to the Loewy series) on its Ziegler spectrum coincide. In this case, the Ziegler spectrum is homeomorphic to the Zariski spectrum by \cite[Thm.~8.2.91]{prest}. \cite[Prop.~8.2.87]{prest} gives sufficient conditions for the technical condition to be satisfied, however it does not guarantee the condition for a general non semi-artinian absolutely flat ring.

Thus, the complete proof of Proposition~\ref{prop-specsocle=CBder} is presented, to cover the non semi-artinian case. Moreover, its arguments will be used again in showing the identification of $s_{\alpha}A$ given in  Proposition~\ref{prop-soclefiltrarion=intersectionprimesCBfiltration} below.
\end{remark}

To conclude the correspondence between the Loewy series of an absolutely flat ring $A$ and the topology of its Zariski spectrum, we provide an explicit description of $s_{\alpha}A$ in terms of the prime ideals filtered by the Cantor-Bendixson rank.

\begin{lemma}\label{lem-socle=intersectionisolatedprimes}
Let $A$ be an absolutely flat ring. Then we have
\[ sA= \bigcap_{P \in \delta \emph{Spec}(A)}P. \]
\end{lemma}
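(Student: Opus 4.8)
The plan is to exploit the fact that in an absolutely flat ring every ideal is radical. First I would observe that by fact~(c) the quotient $A/sA$ is again absolutely flat, and an absolutely flat (von Neumann regular) ring is reduced: if $\bar a^2 = 0$ in $A/sA$, choosing $x$ with $\bar a \bar x \bar a = \bar a$ gives $\bar a = \bar a \bar x \bar a = \bar x \bar a^2 = 0$. Hence $A/sA$ has trivial nilradical, which is precisely the statement that $sA$ is a radical ideal of $A$. Combining this with the standard description of the radical of an ideal as the intersection of the primes containing it, I obtain
\[ sA = \sqrt{sA} = \bigcap_{\substack{P \in \text{Spec}(A)\\ sA \subseteq P}} P. \]

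Next I would invoke Lemma~\ref{lem-socle<>CBder}, which already identifies $\{P \in \text{Spec}(A) : sA \subseteq P\}$ with $\delta\text{Spec}(A)$, i.e.\ the primes containing the socle are exactly the non-isolated points of the Zariski spectrum. Substituting this into the displayed equality yields $sA = \bigcap_{P \in \delta\text{Spec}(A)} P$, which is the assertion. The only point to keep an eye on is the degenerate case where $A$ is semisimple (a finite product of fields): then $sA = A$ and $\delta\text{Spec}(A) = \emptyset$, but the empty intersection is all of $A$ by convention, so the identity still holds.

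I do not expect a real obstacle here once the radicality of $sA$ is granted. If one preferred to avoid appealing to "every ideal is radical", one could argue the two inclusions by hand: $\subseteq$ follows because each simple ideal $I = (e)$ with $e$ idempotent lies in every non-isolated prime (this is essentially contained in the proof of Lemma~\ref{lem-socle<>CBder}); and $\supseteq$ follows since for $a$ in the intersection one may replace $a$ by an idempotent generator via fact~(b), deduce that $U(a)$ is a clopen set consisting only of isolated points, hence finite by compactness of the Stone space $\text{Spec}(A)$, and conclude that $(a)$ is a finite direct sum of simple ideals, so $a \in sA$. The radical-ideal argument is shorter and is the one I would write up.
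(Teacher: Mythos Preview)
Your argument is correct. Both your proof and the paper's ultimately rest on the same fact---that every ideal of an absolutely flat ring is radical---but you reach the conclusion more directly. You observe that $sA$ is radical, hence equals $\bigcap_{P\supseteq sA}P$, and then simply read off the index set from Lemma~\ref{lem-socle<>CBder}. The paper instead first proves the independent equivalence ``$Q$ is non-isolated $\Leftrightarrow \bigcap_{P\in\delta\text{Spec}(A)}P\subseteq Q$'' (by showing that for an isolated $Q$ the associated minimal ideal lies in every other prime), deduces $V\bigl(\bigcap_{P\in\delta\text{Spec}(A)}P\bigr)=V(sA)$, and only then invokes radicality. Your route bypasses that intermediate step entirely; the paper's detour, on the other hand, makes the containment $I\subseteq\bigcap_{P\in\delta\text{Spec}(A)}P$ for each minimal ideal $I$ explicit, which is a mild bonus but not needed for the lemma as stated. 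Your handling of the degenerate semisimple case and the alternative hands-on argument via compactness of $U(a)$ are both fine.
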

\begin{proof}
We first prove that $Q \in \text{Spec}(A)$ is not isolated if and only if $\bigcap_{P \in \delta \text{Spec}(A)} P\subseteq Q$.

For the non-trivial implication we need to show $Q$ isolated implies $\bigcap_{P \in \delta \text{Spec}(A)} P\not \subseteq Q$.

Given $Q \in \text{Spec}(A)$ an isolated prime, we set $I\in A$ to be non-zero minimal ideal of $A$ associated with $Q$ by the correspondence of Lemma~\ref{lem-isolated<>simple}. Observe that for a prime ideal $P\neq Q$ we have $I \cap P$ is either $0$ or $I$, by the minimality of $I$. If this intersection were $0$, it would follow that $P \subseteq Q$, which is an absurd since $P$ is maximal. We conclude $P\cap I=I$, i.e.\ $I\subseteq P$.

Varying $P$ among all elements of $\delta \text{Spec}(A)$ we deduce $I \subseteq \bigcap_{P \in \delta \text{Spec}(A)}P$. If it held $\bigcap_{P \in \delta \text{Spec}(A)}P \subseteq Q$ we would get $I \subseteq Q$, an absurd since $A=Q\oplus I$ by construction. 

This equivalence together with Lemma~\ref{lem-socle<>CBder} implies that the subsets of $\text{Spec}(A)$ given by  $V\big(\bigcap_{P \in \delta \text{Spec}(A)}P \big)$ and $V(sA)$ coincide. This means that the radicals of the ideals $\bigcap_{P \in \delta \text{Spec}(A)}P$ and $sA$ coincide, but since $A$ is absolutely flat all ideals are radical. This proves the claim.
\end{proof}

\begin{proposition}\label{prop-soclefiltrarion=intersectionprimesCBfiltration}
Let $A$ be an absolutely flat ring. Then for any ordinal $\alpha$ it holds
\[ s_{\alpha}A=\bigcap_{P \in \delta^{\alpha}\emph{Spec}(A)}P. \]
\end{proposition}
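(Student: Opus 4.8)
The plan is to prove the statement by transfinite induction on $\alpha$, exactly mirroring the structure of the proof of Proposition~\ref{prop-specsocle=CBder}. The base case $\alpha = 0$ is trivial since $s_0 A = 0$ and the intersection over $\delta^0 \text{Spec}(A) = \text{Spec}(A)$ of all primes is the nilradical, which vanishes as $A$ is reduced. The successor step is the core of the argument, and the limit step should be a short computation with unions and intersections of ideals.

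\textbf{Successor step.} Suppose $s_\alpha A = \bigcap_{P \in \delta^\alpha \text{Spec}(A)} P$. I want to show $s_{\alpha+1} A = \bigcap_{P \in \delta^{\alpha+1}\text{Spec}(A)} P$. The key is to pass to the quotient ring $B = A/s_\alpha A$, which is absolutely flat by fact (c). By Proposition~\ref{prop-specsocle=CBder} (or directly by Lemma~\ref{lem-socle<>CBder} applied iteratively) we have $\text{Spec}(B) \cong \delta^\alpha \text{Spec}(A)$, compatibly with the identification $\text{Spec}(B) \cong \{ P \in \text{Spec}(A) : s_\alpha A \subseteq P\}$. Now apply Lemma~\ref{lem-socle=intersectionisolatedprimes} to $B$: it gives
\[ sB = \bigcap_{\overline{P} \in \delta \text{Spec}(B)} \overline{P}. \]
Translating back along the quotient map $A \to B$: the left side $sB$ pulls back to $s_{\alpha+1} A$ by the very definition of the Loewy series ($s_{\alpha+1}A/s_\alpha A = s(A/s_\alpha A)$). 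On the right, $\delta \text{Spec}(B) \cong \delta(\delta^\alpha \text{Spec}(A)) = \delta^{\alpha+1}\text{Spec}(A)$, and each $\overline{P}$ pulls back to the corresponding prime $P \in \delta^{\alpha+1}\text{Spec}(A)$; since these all contain $s_\alpha A$ and the preimage of an intersection is the intersection of preimages, the right side pulls back to $\bigcap_{P \in \delta^{\alpha+1}\text{Spec}(A)} P$. This yields the claim for $\alpha+1$.

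\textbf{Limit step.} For a limit ordinal $\lambda$ we have $s_\lambda A = \bigcup_{\alpha < \lambda} s_\alpha A$ by definition, and $\delta^\lambda \text{Spec}(A) = \bigcap_{\alpha<\lambda}\delta^\alpha \text{Spec}(A)$ by definition of the iterated Cantor--Bendixson derivative. Using the inductive hypothesis, $\bigcup_{\alpha<\lambda} s_\alpha A = \bigcup_{\alpha<\lambda}\bigcap_{P \in \delta^\alpha \text{Spec}(A)} P$. It remains to identify this with $\bigcap_{P \in \delta^\lambda \text{Spec}(A)} P$. One inclusion is clear: each $s_\alpha A \subseteq P$ for every $P \in \delta^\lambda \text{Spec}(A) \subseteq \delta^\alpha \text{Spec}(A)$, so $s_\lambda A \subseteq \bigcap_{P \in \delta^\lambda}P$. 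For the reverse inclusion, I would argue as in the limit step of Proposition~\ref{prop-specsocle=CBder}: the identifications $\text{Spec}(A/s_\alpha A) \cong \{P : s_\alpha A \subseteq P\}$ give $\{P : s_\lambda A \subseteq P\} = \bigcap_{\alpha<\lambda}\{P : s_\alpha A \subseteq P\} = \bigcap_{\alpha<\lambda}\delta^\alpha\text{Spec}(A) = \delta^\lambda\text{Spec}(A)$, so $V(s_\lambda A) = V\big(\bigcap_{P \in \delta^\lambda\text{Spec}(A)}P\big)$; since $A$ is reduced and absolutely flat, all ideals are radical and this equality of closed subsets upgrades to equality of ideals.

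\textbf{Main obstacle.} The delicate point is the bookkeeping in the successor step: verifying that pulling back $sB = \bigcap_{\overline P \in \delta\text{Spec}(B)}\overline P$ along $A \twoheadrightarrow B$ is compatible with all three identifications simultaneously — the Loewy series relation, the homeomorphism $\text{Spec}(B) \cong \delta^\alpha \text{Spec}(A)$, and the fact that the preimage of $\bigcap \overline P$ equals $\bigcap$ of the preimages (which uses that all the $\overline P$, hence all relevant primes, contain $\ker(A \to B) = s_\alpha A$). Once these are set up carefully the argument is purely formal, but stating the correspondence cleanly is where care is needed.
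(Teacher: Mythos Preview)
Your proposal is correct and follows essentially the same transfinite induction as the paper, applying Lemma~\ref{lem-socle=intersectionisolatedprimes} to the quotient $A/s_\alpha A$ in the successor step and unwinding definitions at limit ordinals. The only cosmetic difference is that the paper phrases the successor step via images under $A\twoheadrightarrow A/s_\alpha A$ and verifies the equality $(\bigcap P)/s_\alpha A = \bigcap (P/s_\alpha A)$ by a two-inclusion argument, whereas you work with preimages, for which commutation with arbitrary intersections is automatic.
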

\begin{proof}
We argue by transfinite induction on $\alpha$.

If $\alpha=0$ we have $s_0A=0$ by definition, while $\bigcap_{P \in \delta^0 \text{Spec}A}P=\bigcap_{P \in \text{Spec}A}P$. But the intersection of all maximal ideals of a ring coincides with the nilradical, in our case $A$ is reduced hence its nilradical is zero.

The case $\alpha=1$ is  Lemma~\ref{lem-socle=intersectionisolatedprimes}.

Now suppose the claim has been proved up to an ordinal $\alpha$, then we show the claim is true for $\alpha+1$.

Applying Lemma~\ref{lem-socle=intersectionisolatedprimes} to the ring $A/s_{\alpha}A$ we obtain
\[ s_{\alpha+1}A/s_{\alpha}A=s(A/s_{\alpha}R)=\bigcap_{Q \in \delta \text{Spec}(A/s_{\alpha}A)}Q. \]
Under the identification of Proposition~\ref{prop-specsocle=CBder} this set can be described as 
\[ \bigcap_{Q \in \delta \text{Spec}(A/s_{\alpha}A)}Q= \bigcap_{P \in \delta^{\alpha+1}\text{Spec}(A)}P/s_{\alpha}A. \]
Observe it holds the inclusion 
\[\Bigg( \bigcap_{P \in \delta^{\alpha+1}\text{Spec}(A)}P \Bigg)/s_{\alpha}A \subseteq \bigcap_{P \in \delta^{\alpha+1}\text{Spec}(A)}P/s_{\alpha}A =s_{\alpha+1}A/s_{\alpha}A. \]
However, a priori the first inclusion could be proper, hence we can only deduce $\bigcap_{P \in \delta^{\alpha+1}\text{Spec}(A)}P \subseteq s_{\alpha+1}A$.

Observe that the equality
\[ s_{\alpha+1}A/s_{\alpha}A=\bigcap_{P \in \delta^{\alpha+1}\text{Spec}(A)}P/s_{\alpha}A\]
implies $s_{\alpha+1}A/s_{\alpha}A\subseteq P/s_{\alpha}A$ for all $P \in \delta^{\alpha+1}\text{Spec}(A)$, thus $s_{\alpha+1}A\subseteq P$. This gives us the inverse inclusion $s_{\alpha+1}A\subseteq \bigcap_{P \in \delta^{\alpha+1}\text{Spec}(A)}P$ and we conclude these two ideals coincide.

Let $\lambda$ be a limit ordinal and suppose we showed the claim for all $\alpha<\lambda$. By definition $s_{\lambda}A=\bigcup_{\alpha<\lambda}s_{\alpha}A$ and we have $s_{\alpha}A=\bigcap_{P\in \delta^{\alpha}\text{Spec}(A)}P$. Observe that for $\beta <\gamma$ we have the inclusion $\bigcap_{P\in \delta^{\beta}\text{Spec}(A)}P\subseteq \bigcap_{P\in \delta^{\gamma}\text{Spec}(A)}P$. Thus, it follows that 
\[ \bigcup_{\alpha<\lambda}s_{\alpha}A=\bigcup_{\alpha<\lambda}\Bigg( \bigcap_{P\in \delta^{\alpha}\text{Spec}(A)}P\Bigg)= \colim_{\alpha<\lambda} \ \bigcap_{P\in \delta^{\alpha}\text{Spec}(A)}P=\bigcap_{P\in \bigcap_{\alpha<\lambda}\delta^{\alpha}\text{Spec}(A)}P=\bigcap_{P \in \delta^{\lambda}\text{Spec}(A)}P.  \]
This concludes the induction argument.
\end{proof}

\begin{corollary}\label{cor-perfectquotientringnotsemiart}
Let $R$ be an absolutely flat ring which is not semi-artinian. Let $\sigma$ be the minimal ordinal such that $s_{\sigma}R$ stabilizes. Hence, the ring $R/\bigcap_{P \in \delta^{\sigma}\emph{Spec}(R)}P$ is an absolutely flat ring with trivial socle and its Zariski spectrum corresponds to the maximal perfect subspace of $\emph{Spec}(R)$.
\end{corollary}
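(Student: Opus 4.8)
The plan is to combine Proposition~\ref{prop-soclefiltrarion=intersectionprimesCBfiltration} with the fact that the Cantor-Bendixson process must stabilize at some ordinal (because $\text{Spec}(R)^{\flat}$ is a set) and with the characterization of the maximal perfect subspace as the stable value of the iterated derivative. First I would observe that by Proposition~\ref{prop-specsocle=CBder}, for every ordinal $\alpha$ we have $\text{Spec}(R/s_{\alpha}R)\cong \delta^{\alpha}\text{Spec}(R)$, and by Proposition~\ref{prop-soclefiltrarion=intersectionprimesCBfiltration} we have $s_{\alpha}R=\bigcap_{P\in\delta^{\alpha}\text{Spec}(R)}P$. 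Since the descending chain of closed subsets $\delta^{\alpha}\text{Spec}(R)$ of the spectral space $\text{Spec}(R)$ must stabilize (equivalently, the ascending chain of ideals $s_{\alpha}R$ stabilizes, since each $s_{\alpha}R$ is recovered from $\delta^{\alpha}\text{Spec}(R)$ and vice versa), there is a minimal such ordinal $\sigma$; write $P_{\sigma}=\bigcap_{P\in\delta^{\sigma}\text{Spec}(R)}P=s_{\sigma}R$ for the stable value.

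Next I would identify the quotient. By fact (c) recalled before Lemma~\ref{lem-isolated<>simple}, $R/P_{\sigma}$ is again absolutely flat. The identification $\text{Spec}(R/s_{\sigma}R)\cong\delta^{\sigma}\text{Spec}(R)$ from Proposition~\ref{prop-specsocle=CBder} exhibits its Zariski spectrum as $\delta^{\sigma}\text{Spec}(R)$. That this subspace is the maximal perfect subspace follows from the standard fact that $\delta^{\sigma}X=\delta^{\sigma+1}X$ means $\delta^{\sigma}X$ has no isolated points (it is dense-in-itself), it is closed in $X$, and any perfect (closed dense-in-itself) subset of $X$ is contained in every $\delta^{\alpha}X$ by transfinite induction, hence in $\delta^{\sigma}X$; thus $\delta^{\sigma}\text{Spec}(R)$ is the maximal perfect subspace in the sense recalled in the earlier remark on Cantor-Bendixson derivatives. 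Finally, the socle of $R/P_{\sigma}$ is trivial: applying Lemma~\ref{lem-socle=intersectionisolatedprimes} to $A=R/s_{\sigma}R$ gives $s(R/s_{\sigma}R)=\bigcap_{Q\in\delta\text{Spec}(R/s_{\sigma}R)}Q$, and $\delta\text{Spec}(R/s_{\sigma}R)\cong\delta^{\sigma+1}\text{Spec}(R)=\delta^{\sigma}\text{Spec}(R)\cong\text{Spec}(R/s_{\sigma}R)$ by minimality of $\sigma$, so this intersection is the intersection of all maximal ideals of the reduced ring $R/s_{\sigma}R$, which is zero; equivalently $s_{\sigma+1}R=s_{\sigma}R$ translates to $s(R/s_\sigma R)=0$.

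The bookkeeping is essentially routine once the earlier propositions are in hand; the only point that needs a word of care is the equivalence between the ascending chain of ideals $s_{\alpha}R$ stabilizing and the descending chain $\delta^{\alpha}\text{Spec}(R)$ stabilizing. This is where I expect the mild friction: one must note that $\delta^{\alpha}\text{Spec}(R)=\delta^{\alpha+1}\text{Spec}(R)$ forces $s_{\alpha}R=s_{\alpha+1}R$ via $s_{\alpha}R=\bigcap_{P\in\delta^{\alpha}\text{Spec}(R)}P$ (Proposition~\ref{prop-soclefiltrarion=intersectionprimesCBfiltration}), and conversely $s_{\alpha}R=s_{\alpha+1}R$ forces equality of the spectra via Proposition~\ref{prop-specsocle=CBder}; then stabilization of either chain is automatic since a proper class of strict inclusions is impossible, which is exactly the device already used in the proof of Proposition~\ref{prop-refinementcoverweaklyscat}. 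Having pinned down $\sigma$ this way, the statement that $R/\bigcap_{P\in\delta^{\sigma}\text{Spec}(R)}P=R/s_{\sigma}R$ is absolutely flat with trivial socle and spectrum the maximal perfect subspace of $\text{Spec}(R)$ is then immediate.
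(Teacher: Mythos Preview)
Your proposal is correct and follows essentially the same route as the paper: invoke Proposition~\ref{prop-soclefiltrarion=intersectionprimesCBfiltration} to identify $s_{\sigma}R$ with the intersection of primes, Proposition~\ref{prop-specsocle=CBder} to identify the spectrum of the quotient with $\delta^{\sigma}\text{Spec}(R)$, and then use stabilization at $\sigma$ to conclude this is the maximal perfect subset. The only cosmetic difference is in the trivial-socle step: the paper argues via Lemma~\ref{lem-isolated<>simple} (no isolated points $\Rightarrow$ no non-zero minimal ideals), whereas you apply Lemma~\ref{lem-socle=intersectionisolatedprimes} directly and reduce to the nilradical being zero; both are immediate and interchangeable.
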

\begin{proof}
We have just to put together the results proved above. Proposition~\ref{prop-soclefiltrarion=intersectionprimesCBfiltration} states  $s_{\sigma}R=\bigcap_{P \in \delta^{\sigma}\text{Spec}(R)}P$ and Proposition~\ref{prop-specsocle=CBder} tells us that $\text{Spec}(R/s_{\sigma}R)$ coincides with $\delta^{\sigma}\text{Spec}(R)$. Since $s_{\sigma}R=s_{\sigma+1}R$ we have $\delta^{\sigma}\text{Spec}(R)=\delta^{\sigma+1}\text{Spec}(R)$, hence this subspace coincides with the maximal perfect subset of $\text{Spec}(R)$. Since the spectrum of $R/s_{\sigma}R$ has no isolated points Lemma~\ref{lem-isolated<>simple} implies that this ring has no minimal non-zero ideals.
\end{proof}
\begin{example}\label{ex-infiniteproductfields}
Let $k$ be any field and set $R=\prod_{\mathbb{N}}k$ which is an absolutely flat ring. In this case $\text{Spec}(R)$ is homeomorphic to $\beta \mathbb{N}$, the Stone-Cech compactification of the natural numbers. This space can be subdivided in its set of isolated points, corresponding to the principal ultrafilters on $\mathbb{N}$, and its maximal perfect subset, given by the non-principal ultrafilters.

For $i \in \mathbb{N}$, we set $e_i$ to be the element of $R$ with entry $1$ at its $i$-th coordinate and $0$ otherwise. Then $(1-e_i)$ is the isolated prime ideal corresponding to the principal ultrafilter induced by $i$. We have a decomposition $R=(1-e_i)\oplus (e_i)$ as in Lemma~\ref{lem-isolated<>simple}. It follows $s_1R=\oplus_i (e_i)=\sum_{\mathbb{N}}k$, coherently with the usual computations in the literature. The direct sum being the intersection of the prime ideals associated with the non-principal ultrafilters corresponds to the known fact that the cofinite filter coincides with the intersection of all non-principal ultrafilters.

In this situation, the ring invoked in Corollary~\ref{cor-perfectquotientringnotsemiart} is $\prod_{\mathbb{N}}k/ \sum_{\mathbb{N}}k$ which is known to have trivial socle.
\end{example}

We are finally ready to construct the injective superdecomposable module we wanted.

\begin{theorem}\label{thm-injsuperdecringnotsemiart}
Let $R$ be an absolutely flat ring which is not semi-artinian. Let $\sigma$ be the minimal ordinal such that $s_{\sigma}R$ stabilizes. Let $E$ denote the injective hull of $R/\bigcap_{P \in \delta^{\sigma}\emph{Spec}(R)}P$. Then $E$ is an injective superdecomposable module and as an element of $\emph{D}(R)$ its support coincides with $\delta^{\sigma}\emph{Spec}(R)$, the maximal perfect subset of $\emph{Spec}(R)$.
\end{theorem}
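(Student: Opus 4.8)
The plan is to study the injective hull $E=E_R(A)$ of $A:=R/s_\sigma R$, where we use Proposition~\ref{prop-soclefiltrarion=intersectionprimesCBfiltration} to identify $s_\sigma R=\bigcap_{P\in\delta^\sigma\text{Spec}(R)}P$, and to transfer the structural information about $A$ recorded in Corollary~\ref{cor-perfectquotientringnotsemiart} to $E$ along the essential extension $A\hookrightarrow E$. Since $R$ is not semi-artinian we have $s_\sigma R\subsetneq R$, so $A\ne 0$ and hence $E\ne 0$. As $E$ is injective by construction, the two things to prove are that $E$ is superdecomposable and that its support in $\text{D}(R)$ is exactly $\delta^\sigma\text{Spec}(R)$.

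For superdecomposability, I would first record the general fact that an injective module over an absolutely flat ring $R$ is superdecomposable if and only if its socle is zero. If $S\subseteq E$ is a simple submodule of an injective module, then $E_R(S)$ is an indecomposable injective direct summand of $E$, so a superdecomposable injective has zero socle. Conversely, an indecomposable injective is uniform; for any $0\ne x$ in a uniform module the cyclic submodule $Rx\cong R/\text{Ann}(x)$ is again uniform and absolutely flat, hence has no nontrivial idempotents, hence is a field, hence simple — so an injective module possessing an indecomposable summand has nonzero socle. It therefore suffices to check $sE=0$. As $A$ is essential in $E$, every simple submodule of $E$ meets $A$ and is thus contained in $A$; and since $s_\sigma R$ annihilates $A$, the $R$-submodules of $A$ coincide with its $A$-submodules, so the socle of $A$ over $R$ equals its socle over $A$, which is zero by Corollary~\ref{cor-perfectquotientringnotsemiart}. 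Hence $sE=0$ and $E$ is superdecomposable.

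For the support, recall that $R$ absolutely flat gives $R_P=\kappa(P)$ for all $P\in\text{Spec}(R)$, and that the support inside $\text{D}(R)$ of an $R$-module $N$ placed in degree $0$ is its ordinary support $\text{Supp}_R(N)=\{P\in\text{Spec}(R):N_P\ne 0\}$, since $g(\fp)\otimes N\simeq N\otimes_R^{\mathbb L}\kappa(P)=N_P$; see \cite{ste-derabs}. Now $A_P=R_P/(s_\sigma R\cdot R_P)$ is nonzero precisely when $s_\sigma R\subseteq P$, so $\text{Supp}_R(A)=V(s_\sigma R)=\text{Spec}(R/s_\sigma R)$, which is $\delta^\sigma\text{Spec}(R)$ by Proposition~\ref{prop-specsocle=CBder}. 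Exactness of localization applied to $A\hookrightarrow E$ gives $\text{Supp}_R(A)\subseteq\text{Supp}_R(E)$. For the reverse inclusion, take $P$ with $s_\sigma R\not\subseteq P$ and, using basic fact (b) above, choose an idempotent $e\in s_\sigma R$ with $e\notin P$. Then $e$ annihilates $A$, so $eE$ is a submodule of $E$ meeting $A$ trivially, whence $eE=0$ by essentiality; thus $E$ is an $R/eR$-module and $\text{Supp}_R(E)\subseteq V(eR)$, which does not contain $P$. Hence $\text{Supp}_R(E)\subseteq V(s_\sigma R)$, so $\text{Supp}(E)=\delta^\sigma\text{Spec}(R)$, the maximal perfect subset of $\text{Spec}(R)$ by Corollary~\ref{cor-perfectquotientringnotsemiart}; in particular this is consistent with Example~\ref{ex-orthogonalperfectsupport}, which a priori only guarantees that this support is perfect.

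I expect the delicate point to be the identification of the tensor triangular support of the module $E$ with its homological locus $\{P:E_P\ne 0\}$ — that is, that $g(\fp)\otimes E$ and $E_P$ have the same vanishing — together with the bookkeeping along $A\hookrightarrow E$ ensuring that neither the socle nor the support grows when we pass to the injective hull; the remaining ingredients are direct invocations of Corollary~\ref{cor-perfectquotientringnotsemiart} and Propositions~\ref{prop-specsocle=CBder}–\ref{prop-soclefiltrarion=intersectionprimesCBfiltration} or of standard injective-module theory over von Neumann regular rings.
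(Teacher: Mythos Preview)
Your argument is correct, and both halves take a somewhat different route from the paper's.

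For superdecomposability, the paper invokes \cite[Lemma~5.14]{noncommnoethrings} to reduce to showing that $A=R/s_\sigma R$ has no uniform submodules, and then verifies this by an explicit idempotent computation inside an arbitrary nonzero ideal of $A$. You instead prove the cleaner equivalence ``injective over absolutely flat $\Leftrightarrow$ superdecomposable iff zero socle'' and then read off $sE=sA=0$ directly from Corollary~\ref{cor-perfectquotientringnotsemiart} via essentiality. Your route is shorter precisely because that corollary already records $sA=0$; the paper's route is more hands-on but does not need the general equivalence.

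For the upper bound on $\text{Supp}(E)$, the paper appeals to \cite[Thm.~4.7]{ste-derabs} (injective superdecomposables are orthogonal to all $g(\fp)$) together with Example~\ref{ex-orthogonalperfectsupport} to force $\text{Supp}(E)\subseteq\delta^\sigma\text{Spec}(R)$. Your argument is more elementary and self-contained: pick an idempotent $e\in s_\sigma R\setminus P$, use essentiality to get $eE=0$, and conclude $E_P=0$. This avoids the external input from \cite{ste-derabs} entirely, at the cost of not illustrating how the theorem fits into the general picture of Example~\ref{ex-orthogonalperfectsupport}. Your closing worry about identifying the tensor triangular support with $\{P:E_P\neq 0\}$ is handled exactly as in the paper: over an absolutely flat ring $g(\fp)\cong R_P\cong R/P$ and the derived tensor is underived, so this identification is immediate.
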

\begin{proof}
We first show $E$ is superdecomposable. By \cite[Lemma~5.14]{noncommnoethrings} this is equivalent to proving that $R/s_{\sigma}R$ does not admit uniform submodules, i.e.\ for any non-zero ideal $I \subseteq R/s_{\sigma}R$ there exist non-zero ideals $J_1, J_2\subseteq I$ such that $J_1 \cap J_2=0$.

Let us consider a generic non-zero ideal $I$ and take $i \in I \setminus \{0,1 \}$. Since we are interested only in the associated principal ideal $(i)\subseteq I$ we can assume $i$ is idempotent. Using the orthogonal idempotent $1-i$ we obtain the decomposition $R/s_{\sigma}R=(i)\oplus (1-i)$.

First consider the case $(1-i)\cap I=0$, which would imply $I=(i)$. We showed in Corollary~\ref{cor-perfectquotientringnotsemiart} that $R/s_{\sigma}R$ has no non-zero minimal ideals. Hence $(i)$ must have a proper non-zero submodule, say $(xi)\subsetneq (i)$ where again we can assume the element $x$ to be idempotent. The fact that $ix \neq i$ implies $i(1-x)\neq 0$. Therefore, $(ix)$ and $(i(1-x))$ provide two non-zero submodules of $I$ with trivial intersection.

Now consider the case $(1-i)\cap I \neq 0$, i.e.\ there exists some idempotent $x$ such that $(x(1-i))\subseteq I$ and $(x(1-i))\neq 0$. If it held $xi=0$, this would imply $0 \neq (x)\subseteq I$. In this situation the ideals $(x)$ and $(i)$ are non-zero submodules of $I$ such that $(i)\cap (x)=0$, just as we need. Thus, we assume $xi\neq 0$ and in this instance $(xi)$ and $(x(1-i))$ are the non-zero submodules of $I$ doing the job.

We finally show that $\text{Supp}(E)=\delta^{\sigma}\text{Spec}(R)$. We recall two important facts about the tensor-triangulated category $\text{D}(R)$. First, since all $R$-modules are flat, the tensor product of modules is an exact functor, thus the derived tensor product on $\text{D}(R)$ coincides with the non-derived tensor product of chains. Second, since the spectrum $\text{Spec}(R)$ is $T_1$ we have that the idempotent $g(P)$ associated with the prime $P$ coincides with $L_{P}\mbu=R_{P}$, the localization at $P$. Furthermore, it is easy to prove that the localization morphism $R\rightarrow R_P$ is surjective and its kernel coincides with $P$, thus we deduce $g(P)\cong R/P$.

We have that the support of the ring $R/s_{\sigma}R$ coincides with $\delta^{\sigma}\text{Spec}(R)$. Indeed, for any prime $Q \in \text{Spec}(R)$ we can compute
\[ R/{s_{\sigma}R}\otimes g(Q)= R/{s_{\sigma}R}\otimes R/Q=R/(s_{\sigma}R+Q). \]
Since $s_{\sigma}R=\bigcap_{P \in \delta^{\sigma}\text{Spec}(R)}P$, if $Q \in \delta^{\sigma}\text{Spec}(R)$ it follows $s_{\sigma}R+Q=Q$ and $R/{s_{\sigma}R}\otimes g(Q)\neq 0$. If instead $Q \not \in \delta^{\sigma}\text{Spec}(R)$, by Proposition~\ref{prop-soclefiltrarion=intersectionprimesCBfiltration} and Proposition~\ref{prop-specsocle=CBder} we have $\bigcap_{P \in \delta^{\sigma}\text{Spec}(R)}P\not \subset Q$. Thus $Q+\bigcap_{P \in \delta^{\sigma}\text{Spec}(R)}P$ is an ideal strictly containing the maximal ideal $Q$, hence it must coincide with the whole ring $R$ and consequently $R/{s_{\sigma}R}\otimes g(Q)=0$.

If we tensor the inclusion $R/s_{\sigma}R\hookrightarrow E$ with $g(P)$ we still have an inclusion since $g(P)$ is flat. Therefore, it follows that $\text{Supp}(R/s_{\sigma}R)\subseteq \text{Supp}(E)$. Example~\ref{ex-orthogonalperfectsupport} and \cite[Thm.~4.7]{ste-derabs} give us the inverse inclusion $\text{Supp}(E)\subseteq \delta^{\sigma}\text{Spec}(R)$ and we conclude.
\end{proof}

\begin{remark}
One could wonder if the ring $R/s_{\sigma}R$ is already injective, so that it is not necessary to form its injective hull. Unfortunately, this seems too optimistic: if we consider $R=\prod_{\mathbb{N}}k$ as in Example~\ref{ex-infiniteproductfields} then the main theorem of \cite{noninjectivecyclic} implies that $\prod_{\mathbb{N}}k/\sum_{\mathbb{N}}k$ is not an injective $R$-module.

Since this is one of the simplest examples of non semi-artinian absolutely flat rings, this should indicate we should not expect $R/s_{\sigma}R$ to be injective in general.
\end{remark}

\begin{bibdiv}
\begin{biblist}

\bib{BHS-stratification}{article}{
	author={Barthel, T.},
	author={Heard, D.},
	author={Sanders, B.},
	title={Stratification in tensor triangular geometry with applications to spectral Mackey functors},
	date={2023},
	journal={Cambridge Journal of Mathematics},
	volume={11},
	number={4},
	pages={829-915},
}

\bib{BCHS-costratification}{article}{
	author={Barthel, T.},
	author={Castellana, N.},
	author={Heard, D.},
	author={Sanders, B.}
	title={Cosupport in tensor triangular geometry},
	date={2023},
	journal={arXiv:2303.13480},
}

\bib{bik}{article}{
	author={Benson, D.},
	author={Iyengar, S. B.},
	author={Krause, H.},
	title={Stratifying modular representations of finite groups},
	journal={Annals of Mathematics},
	date={2011},
	pages={1643-1684},
	volume={174},
	number={3},
}

  \bib{spespa}{book}{
  author={Dickmann, M.},
  author={Schwartz, N.},
  author={Tressl, M.},
  title={Spectral spaces},
  date={2019},
  place={Cambridge}, 
  series={New Mathematical Monographs},
  DOI={10.1017/9781316543870}, 
  publisher={Cambridge University Press},
  collection={New Mathematical Monographs}
  }

\bib{noncommnoethrings}{book}{
	author={Goodearl, K. R.},
	author={Warfield, R. B.},
	title={An Introduction to Noncommutative Noetherian Rings},
	date={2004},
	place={Cambridge},
	publisher={Cambridge University Press},
	collection={New Mathematical Monographs},
	DOI={10.1017/CBO9780511841699},
	note={Second Edition}	
}

\bib{hopast:ash}{article}{
    author={Hovey, M.},
    author={Palmieri, J. H.},
    author={Strickland, N. P.},
     title={Axiomatic stable homotopy theory},
      ISSN={0065-9266},
   journal={Memoirs of the American Mathematical Society},
    volume={128},
    date={1997},
    number={610},
     pages={x+114},
    }

\bib{spasublocales}{article}{
	author={Niefield, S. B.},
	author={Rosenthal, K. I.},
	title={Spatial sublocales and essential primes},
	journal={Topology and its Applications},
	date={1987},
	volume={26},
	number={3},
	pages={263–269}
		
}

\bib{olivier-absflatrings}{article}{
	author={Olivier, J.-P.},
	title={Anneaux absolument plats universels et épimorphismes à buts réduits},
	journal={Séminaire Samuel. Algèbre commutative},
	volume={2},
	date={1967{-}1968},
	pages={1-12},
	number={6},
}    

\bib{noninjectivecyclic}{article}{
	author={Osofsky, B. L.},
	title={Noninjective cyclic modules},
	journal={Proceedings of the American Mathematical Society},
	volume={19},
	date={1968},
	pages={1383-1384},
}    

\bib{prest}{book}{
	title={Purity, Spectra and Localization},
	author={Prest, M.},
	date={2009},
	publisher={Cambridge University Press},
	series={	Encyclopedia of mathematics and its applications 121},
}

\bib{supp-sanders}{article}{
	author={Sanders, W.}
	title={Support and vanishing for non-Noetherian rings and tensor triangulated categories},
	year={2017},
	journal={arXiv:1710.10199}
}

\bib{ringquot}{book}{
	title={Rings of Quotients: An Introduction to Methods of Ring Theory},
	author={Stenstr\"om, B.},
	date={1975},
	publisher={Springer-Verlag},
	series={Die Grundlehren der mathematischen Wissenschaften in Einzeldarstellungen},
	isbn={0387071172,9780387071176},
}

 \bib{ste-derabs}{article}{
	author={Stevenson, G.},
	title={Derived categories of absolutely flat rings},
	date={2014},
	journal={Homology Homotopy and Applications},
	volume={16},
	number={2},
	pages={45-64},
}

\bib{ste-dimfunctions}{article}{
	author={Stevenson, G.},
	title={The local-to-global principle for triangulated categories via dimension functions},
	date={2017}
	journal={Journal of Algebra},
	volume={473},
	pages={406-429},
}   

\bib{thomason-classification}{article}{
	author={Thomason, R. W.},
	title={The classification of triangulated subcategories},
	journal={Compositio Mathematica},
	date={1997},
	pages={1-27},
	volume={105},
	number={1}
}   

\bib{zou}{article}{
	author={Zou, C.},
	title={Support theories for non-Noetherian tensor triangulated categories},
	date={2023},
	journal={arXiv:2312.08596}
} 

\end{biblist}
\end{bibdiv} 
\end{document}